\theoremstyle{plain}
\newtheorem{theorem}{Theorem}[section]
\newtheorem{lemma}[theorem]{Lemma}
\theoremstyle{definition}
\newtheorem{remark}[theorem]{Remark}
\newtheorem{problem}[theorem]{Problem}
\newcommand{\R}{\mathbb{R}}
\newcommand{\Z}{\mathbb{Z}}
\newcommand{\N}{\mathbb{N}}
\newcommand{\B}{\mathcal{B}}
\newcommand{\CPMM}{\mathcal{CPMM}}
\newcommand{\eps}{\varepsilon}
\begin{document}

\title{Constant slope maps on the extended real line}

\author[MICHA{\L} MISIUREWICZ]{MICHA{\L} MISIUREWICZ${}^\dagger$}
\author[SAMUEL ROTH]{SAMUEL ROTH$^\mathsection$}
\address{${}^\dagger{}^\mathsection$Department of Mathematical Sciences,
Indiana University-Purdue University Indianapolis,
402 N. Blackford Street, Indianapolis, IN 46202, USA}
\address{${}^\dagger{}^\mathsection$Institute of Mathematics, Polish Academy of Sciences,
\'Sniadeckich 8, 00-656 Warsaw, Poland}
\address{${}^\mathsection$Mathematical Institute, Silesian University,
Na Rybn\'{i}\v{c}ku 1, Opava 746 01, Czech Republic}
\email{${}^\dagger$mmisiure@math.iupui.edu}
\email{${}^\mathsection$samuel.roth@math.slu.cz}

\subjclass[2010]{Primary: 37E05, Secondary: 37B10}
\keywords{interval maps, piecewise monotone maps, constant slope,
topological entropy}

\begin{abstract}
For a transitive countably piecewise monotone Markov interval map we
consider the question whether there exists a conjugate map of constant
slope. The answer varies depending on whether the map is continuous or
only piecewise continuous, whether it is mixing or not, what slope we
consider, and whether the conjugate map is defined on a bounded
interval, half-line or the whole real line (with the infinities included).
\end{abstract}


\maketitle

\section{Introduction}\label{sec:intro}

Among piecewise monotone interval maps, the simplest to understand are
the piecewise linear maps with the same absolute value of slope on
every piece; such maps are said to have \emph{constant slope} (and we
will usually say ``slope'' in the meaning ``absolute value of slope''). These
maps offer the dynamicist many advantages. For example, if we wish to
compute the topological entropy, we just take the larger of zero and
the logarithm of the slope. If we wish to study the symbolic dynamics
and the slope is larger than one, then no two points can share the
same itinerary; this already rules out wandering intervals.

There are two classic results by which constant slope maps provide a
good model for understanding more general piecewise monotone maps.
Nearly fifty years ago, Parry proved that every topologically
transitive, piecewise monotone (and even piecewise continuous)
interval map is conjugate to a map of constant slope~\cite{Pa}. Then,
in the 1980's, Milnor and Thurston showed how to modify Parry's
theorem to remove the hypothesis of topological transitivity
\cite{MT}. As long as a piecewise monotone map has positive
topological entropy $\log\lambda$, there exists a semiconjugacy to a map
of constant slope $\lambda$. The semiconjugating map is
nondecreasing, preserving the order of points in the interval, but
perhaps collapsing some subintervals down to single points.

It is natural to ask how well the theory extends to \emph{countably
  piecewise monotone} maps, when we no longer require the set of
turning points to be finite, but still require it to have a countable
closure. The theory has to be modified in several ways. In contrast to
Parry's result, it is possible to construct transitive examples which
are not conjugate to any interval map of constant slope. Such examples
are contained in the authors' prior publication~\cite{MR}. However,
those particular examples are conjugate to constant slope maps on the
extended real line $[-\infty,\infty]$, which we may choose to regard
as an interval of infinite length. In response, Bobok and Bruin posed
the following problem: \textit{Under what conditions does a countably
  piecewise monotone interval map admit a conjugacy to a map of
  constant slope on the extended real line?}

The present paper answers this problem, focusing on the Markov case.
Section~\ref{sec:def} gives the necessary definitions, then closes
with a theorem due to Bobok~\cite{Bo}, that there exists a
nondecreasing semiconjugacy to a map of constant slope $\lambda > 1$
on the \emph{finite length} interval $[0,1]$ if and only if the
relevant transition matrix admits a nonnegative eigenvector with
eigenvalue $\lambda$ and summable entries.

We investigate whether eliminating the summability requirement on the
eigenvector might allow for the construction of constant slope models
on the extended real line. We find two new obstructions, not present
in previous works. We present one example which is topologically
transitive but not mixing (Section~\ref{sec:mix}), and one example
which is mixing but only piecewise continuous
(Section~\ref{sec:pcws}), and we show for both of these examples that
although the transition matrix admits a nonnegative eigenvector with
some eigenvalue $\lambda$, nevertheless there is no conjugacy to any
map of constant slope $\lambda$. These are the only two obstructions;
under the assumptions of continuity and topological mixing, we state
in Section~\ref{sec:main} our main theorem, Theorem~\ref{thm:main},
that a countably piecewise monotone and Markov map which is continuous
and mixing admits a conjugacy to a map of constant slope $\lambda$ on
some non-empty, compact (sub)interval of the extended real line
$[-\infty,\infty]$ if and only if the associated Markov transition
matrix admits a nonnegative eigenvector of eigenvalue $\lambda$.
Sections~\ref{sec:begin} through~\ref{sec:finish} are dedicated to the
proof of Theorem~\ref{thm:main}.

Sections~\ref{sec:example} through~\ref{sec:example2} apply
Theorem~\ref{thm:main} to three explicit examples of continuous
countably piecewise monotone Markov maps. The first
admits conjugate maps of constant slope on the unit interval.  The second
admits conjugate maps of constant slope on the extended real line and the
extended half line.  The third does not admit any conjugate map of constant
slope.  These sections illustrate a variety of novel techniques for calculating
the nonnegative eigenvectors of a countable 0-1 matrix and for calculating
the topological entropy of a countably piecewise monotone map.

\section{Definitions and Background}\label{sec:def}

The extended real line $[-\infty,\infty]$ is the ordered set
$\R\cup\{\infty,-\infty\}$ equipped with the order topology; this
topological space is a two-point compactification of the real line and
is homeomorphic to the closed unit interval $[0,1]$.

There is an exceedingly simple example which illustrates how the
extended real line behaves differently than the unit interval with
respect to constant slope maps. Consider the map $f:[0,1]\to[0,1]$
given by $f(x)=x^2$. A conjugate map on the unit interval must be
monotone with fixed points at $0$ and $1$, and the only constant slope
map with those properties is the identity map. On the other hand, the
map $g:[-\infty,\infty]\to[-\infty,\infty]$ given by $g(x)=x-1$ has
constant slope $1$ and is conjugate to $f$ by the homeomorphism
$h:[0,1]\to[-\infty,\infty]$, $h(x)=-\log_2(-\log_2(x))$. Thus, we
achieve constant slope by taking advantage of the infinite length of
the extended real line and pushing our fixed points out to $\pm
\infty$, see Figure~\ref{fig:ext}.

\begin{figure}[ht]
\begin{center}
\hfill
\includegraphics[height=2in]{./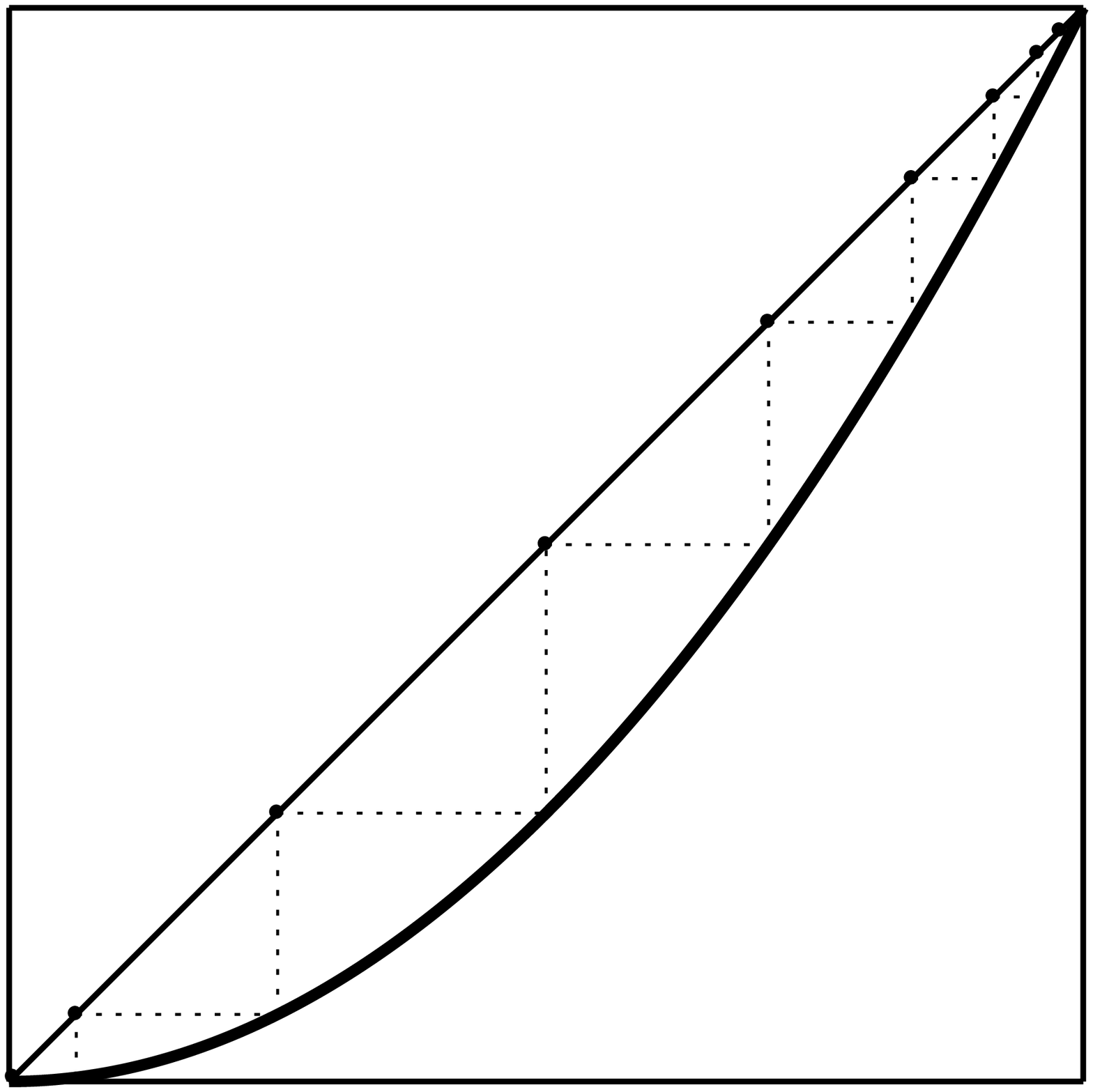}
\hfill
\includegraphics[height=2in]{./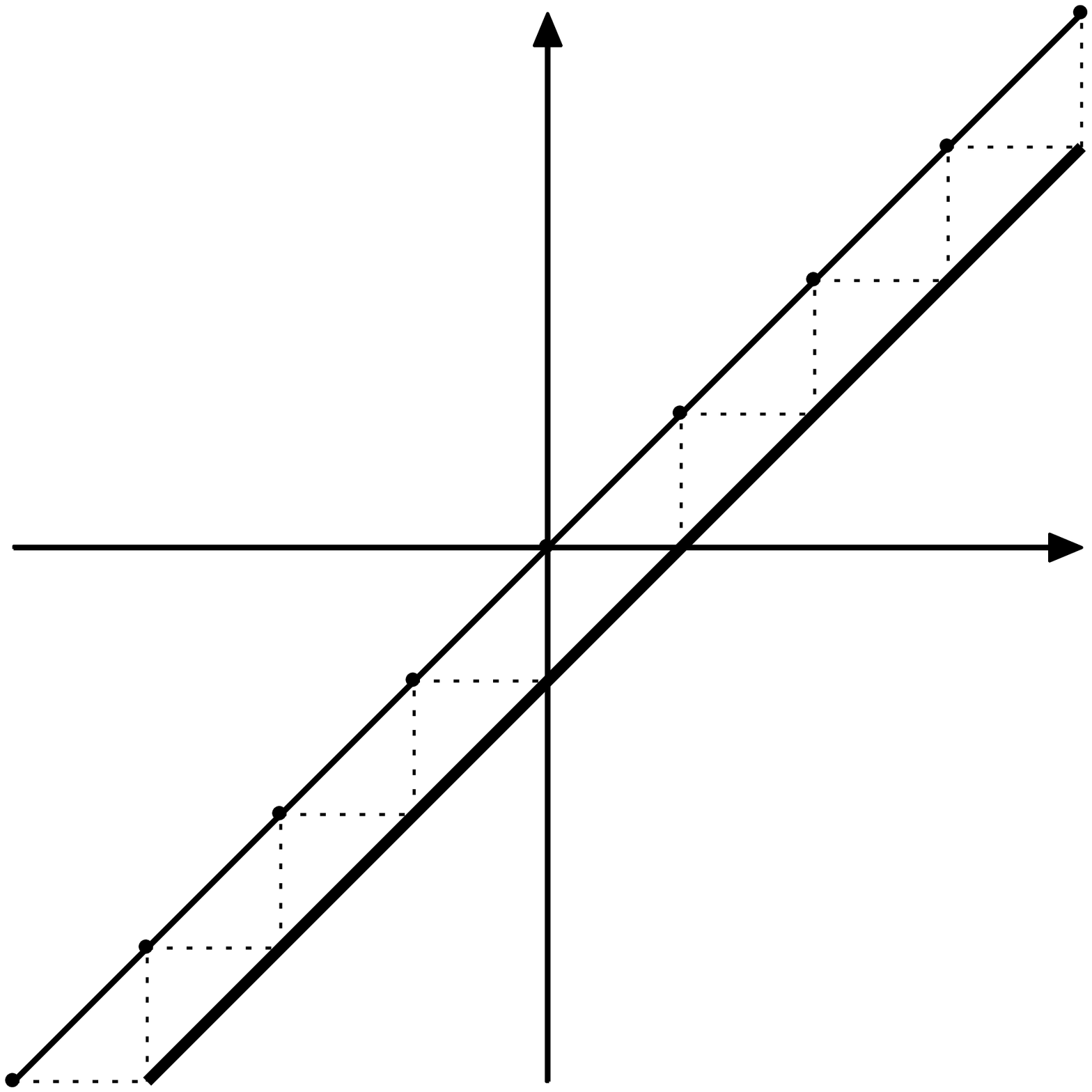}
\hfill
\phantom{.}
\caption{A conjugacy which moves the fixed points to
  $\pm\infty$.}\label{fig:ext}
\end{center}
\end{figure}

Suppose $f$ is a continuous self-map of some interval $[a,b]$,
$-\infty\leq a < b \leq \infty$, and suppose there exists a closed,
countable set $P\subset[a,b]$, $a,b\in P$, such that $f(P)\subset P$
and $f$ is monotone on each component of $[a,b]\setminus P$. Such a
map is said to be \emph{countably piecewise monotone and Markov} with
respect to the \emph{partition set} $P$; the components of $[a,b]\setminus P$ are called
$P$-\emph{basic intervals}, and the set of all $P$-basic intervals is
denoted $\B(P)$. If additionally the restriction of $f$ to each
$P$-basic interval is affine with slope of absolute value $\lambda$,
then we say that $f$ has \emph{constant slope} $\lambda$. This is a
geometric, rather than a topological property, and it is the reason we
must distinguish finite from infinite length intervals. The class of
all continuous countably piecewise monotone and Markov maps is denoted $\CPMM$.
The subclass of those maps which act on the closed unit interval
$[0,1]$ is denoted $\CPMM_{[0,1]}$.

Let us draw the reader's attention to three properties of the class
$\CPMM$. First, the underlying interval $[a,b]$ depends on the map $f$
and is permitted to be infinite in length. Second, the map $f$ is
required to be globally (rather than piecewise) continuous; this is
essential for our use of the intermediate value theorem. And third,
the set $P$ is required to be forward-invariant. This is the Markov
condition; it means that if $I$, $J$ are $P$-basic intervals and
$f(I)\cap J\neq\emptyset$, then $f(I)\supseteq J$.

If $f$ is countably piecewise monotone and Markov with respect to $P$,
then we define the binary \emph{transition matrix} $T=T(f,P)$ with
rows and columns indexed by $\B(P)$ and entries
\begin{equation}\label{transitionmatrix}
T(I,J)=\begin{cases}1,&\textnormal{ if }f(I)\supseteq
J\\0,&\textnormal{ otherwise.}\end{cases}
\end{equation}
This transition matrix represents a linear operator on the linear
space $\R^{\B(P)}$ without any reference to topology. In particular,
in Theorem~\ref{thm:Bobok} below, $T$ is not required to represent
a \emph{bounded} operator on $\ell^1(\R^{\B(P)})$, nor even to
preserve the subspace $\ell^1(\R^{\B(P)})\subset \R^{\B(P)}$.

We wish also to study maps which are only piecewise continuous. To
that end, we define the class $\CPMM^{pc}$ which contains any self-map
$f$ of any closed, nonempty interval $[a,b]\subset[-\infty,\infty]$,
such that $f$ has the following properties with respect to some
closed, countable set $P\subset[a,b]$, $a,b\in P$, namely, that $f(P)\subset P$, that the
restriction $f|_I$ is strictly monotone \emph{and continuous} for each
$P$-basic interval $I\in\B(P)$, and that for each pair $I,J\in\B(P)$,
either $f(I)\cap J=\emptyset$ or else $f(I)\supseteq J$. Note that
this definition places no continuity requirements on the map $f$ at
the points of $P$.\footnote{Although this basically means that we do
  not care what the values of $f$ at the points of $P$ are, we still
  need the assumption $f(P)\subset P$ in order for $P$ to function
  as a ``black hole.''} Studies on finitely piecewise continuous maps often
require one-sided continuity at the endpoints of the basic intervals
(allowing the map to take 2 values at a single point), but our
countable set $P$ may have accumulation points which are not the
endpoint of any $P$-basic interval, so such an approach makes no sense
here.
We define also the subclass $\CPMM^{pc}_{[0,1]}$ for those maps
which act on the closed unit interval $[0,1]$, and we define the
binary transition matrix $T=T(f,P)$ by the same
formula~\eqref{transitionmatrix} as before.

We will also have cause to consider the properties of topological
transitivity and topological mixing. We use
the standard definitions, that a map $f$ is \emph{topologically
  transitive} (respectively, \emph{topologically mixing}) provided
that for every pair of nonempty open sets $U, V$ there exists
$n_0\in\mathbb{N}$ such that $f^{n_0}(U)\cap V\neq\emptyset$
(respectively, for all $n\geq n_0$, $f^n(U)\cap V\neq\emptyset$).

If we ignore the extended real line $[-\infty,\infty]$ and allow only
for maps on the unit interval $[0,1]$, then there is already an
established necessary and sufficient condition to determine when a map
is semiconjugate to a map of constant slope.

\begin{theorem}\label{thm:Bobok}
\emph{(Bobok,~\cite{Bo})} Let $f\in\CPMM^{pc}_{[0,1]}$ be a map with
partition set $P$ and transition matrix
$T$, and fix $\lambda>1$. Then $f$ is semiconjugate via a continuous
nondecreasing map $\psi$ to some map $g\in\CPMM^{pc}_{[0,1]}$ of constant
slope $\lambda$ if and only if $T$ has a nonnegative eigenvector
$v=\left(v_I\right)\in\ell^1(\R^{\B(P)})$ with eigenvalue $\lambda$.
\end{theorem}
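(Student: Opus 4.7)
The plan is to prove the two directions separately. The forward direction proceeds by a direct length calculation: given a nondecreasing continuous semiconjugacy $\psi$ from $f$ to a constant-slope map $g \in \CPMM^{pc}_{[0,1]}$, define $v_I := |\psi(\overline{I})|$ for each $I \in \B(P)$. Since the images $\psi(\overline{I})$ have pairwise disjoint interiors inside $[0,1]$, one has $\sum_I v_I \leq 1$, so $v \in \ell^1(\R^{\B(P)})$. The Markov condition forces $f(\overline{I}) = \bigcup_{J : T(I,J) = 1} \overline{J}$, a consecutive block of basic-interval closures, so $\psi(f(\overline{I}))$ has total length $\sum_J T(I,J) v_J$. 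On the other hand, $f|_{\overline{I}}$ is monotone and continuous, so $g \circ \psi|_{\overline{I}} = \psi \circ f|_{\overline{I}}$ is too, which forces $g$ to be monotone (and continuous) on $\psi(\overline{I})$ and hence affine with slope $\pm\lambda$ there; therefore $|g(\psi(\overline{I}))| = \lambda v_I$. Equating the two expressions gives $(Tv)_I = \lambda v_I$.

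For the converse, normalize so that $\sum_I v_I = 1$, and first build a model map $g$ directly. Subdivide $[0,1]$ into consecutive closed intervals $\tilde I$ of length $v_I$, arranged in the linear order of $\B(P)$, and let $\tilde P$ consist of the endpoints. Define $g$ on each $\tilde I$ to be affine with slope $\pm\lambda$ mapping onto $\bigcup_{J : T(I,J) = 1} \tilde J$, with orientation matching that of $f|_{\overline{I}}$. The target is a single interval of length $\lambda v_I$ because $\{J : T(I,J) = 1\}$ is the consecutive block of basic intervals covering $f(\overline{I})$. Thus $g \in \CPMM^{pc}_{[0,1]}$ has constant slope $\lambda$ and the same transition matrix $T$ as $f$. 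Build the semiconjugacy by itinerary matching: for each $x \in [0,1]$ whose forward $f$-orbit avoids $P$, its $f$-itinerary $I_0 I_1 I_2 \ldots$ is admissible under $T$, and since $g$ has slope $\lambda > 1$, the cylinder $\tilde{I_0} \cap g^{-1}(\tilde{I_1}) \cap \cdots \cap g^{-n}(\tilde{I_n})$ has diameter at most $\lambda^{-n}$; intersecting over $n$ yields a single point, which we take to be $\psi(x)$. Extend to the countable set $\bigcup_n f^{-n}(P)$ via one-sided limits.

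The main technical obstacle will be verifying that this $\psi$ is genuinely continuous, nondecreasing, and intertwines the two dynamics. Monotonicity follows from comparing itineraries lexicographically, exploiting the fact that the orientation of $g$ on $\tilde I$ was chosen to match that of $f$ on $I$; continuity at points with infinite itinerary follows from exponential cylinder shrinking, while continuity at $x$ with $f^n(x) \in P$ eventually requires checking that the corresponding left- and right-limit cylinders in the $g$-system collapse to a common point of $\tilde P$, which is guaranteed because $\tilde P$ is exactly the set of endpoints separating consecutive $\tilde I$'s. The relation $\psi \circ f = g \circ \psi$ is then immediate from the itinerary definition, since both sides send $x$ to the point whose $g$-itinerary is the one-sided shift of the $f$-itinerary of $x$. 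The role of the $\ell^1$ hypothesis is essential precisely in the packing step: without $\sum_I v_I < \infty$, the model intervals $\tilde I$ cannot all fit inside $[0,1]$, which is exactly the obstruction the paper subsequently lifts by passing to the extended real line.
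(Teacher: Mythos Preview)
The paper does not give its own proof of this theorem; it is attributed to Bobok~\cite{Bo}, and the paper offers only a brief informal sketch in the paragraph following Remark~2.2. Your forward direction matches that sketch exactly: set $v_I=|\psi(I)|$, observe summability because the images $\psi(I)$ pack into $[0,1]$, and read off the eigenvector equation from the constant-slope property and the Markov structure. Your converse is also consistent with the sketch (``construct the semiconjugacy in such a way that $|\psi(I)|=v_I$''), and is a sound outline.

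Where your approach diverges from the one the paper itself uses (for the extension Theorem~\ref{thm:main}, in Sections~\ref{sec:begin}--\ref{sec:finish}, which the paper says ``begins very much like the proofs of~\cite[Theorem 2.5]{Bo} and~\cite[Theorem 4.6.8]{ALM}'') is the mechanism for building $\psi$ in the converse. You first assemble the constant-slope model $g$ on $[0,1]$ and then define $\psi(x)$ by itinerary matching, using the $\lambda^{-n}$ shrinking of $g$-cylinders. The paper instead defines $\psi$ directly on the dense set $Q=\bigcup_n f^{-n}(P)$ by the explicit cumulative-sum formula~\eqref{PsiOnQ} and then extends by monotonicity; $g$ is only obtained afterward as $\psi\circ f\circ\psi^{-1}$. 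Both routes are valid in the $\ell^1$ setting. Your itinerary method is conceptually clean and sidesteps the ``no atoms'' analysis, but it requires the target space to be built in advance, which is exactly why it does not generalize to the nonsummable case; the cumulative-sum formula, by contrast, never needs to pre-pack the intervals into $[0,1]$, which is what allows the paper to push through to the extended real line. Your closing remark about the role of the $\ell^1$ hypothesis identifies this precisely.
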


\begin{remark}
The statement of Theorem~\ref{thm:Bobok} in~\cite{Bo} is for the class
$\CPMM_{[0,1]}$ of globally continuous maps, but the proof applies
equally well in the piecewise continuous setting.
\end{remark}

We draw the reader's attention to the requirement
$v\in\ell^1(\R^{\B(P)})$ that the eigenvector should be summable. If
we read the proof in~\cite{Bo}, the reason for this is clear. If we
are given the semiconjugacy $\psi$ to the constant slope map, then we
construct the eigenvector $v$ by setting $v_I=|\psi(I)|$ for each
$P$-basic interval $I$, where $|\cdot|$ denotes the length of an
interval, and therefore the sum of the entries $v_I$ is just the
length of the unit interval $[0,1]$. Conversely, if we are given an
eigenvector $v$, then we rescale it so that the sum of entries is $1$
and then construct the semiconjugacy in such a way that
$|\psi(I)|=v_I$ for all $I$, obtaining a map $g$ of an interval of
length $1$.

\section{Statement of Main Results}\label{sec:main}

We return now to the question, when does a map $f\in\CPMM$ admit a
nondecreasing semiconjugacy $\psi$ to a map $g$ of constant
slope on some compact subinterval of $[-\infty,\infty]$, whether
finite or infinite in length? It is clear that $g$ must belong to the
class $\CPMM$, because $g$ will necessarily be piecewise monotone and
Markov with respect to $\psi(P)$ -- see~\cite[Lemma 4.6.1]{ALM}. Here
are the statements of our main results:

\begin{theorem}\label{thm:main}
Let $f\in\CPMM$ be a map with partition set $P$ and transition matrix
$T$, and fix $\lambda > 1$.
Assume that $f$ is topologically mixing. Then $f$ is conjugate via a
homeomorphism $\psi$ to some map $g\in\CPMM$ of constant slope
$\lambda$ if and only if
\begin{equation}\label{criterion}
T \textnormal{\textit{ has a nonnegative eigenvector }}
v=(v_I)\in\R^{\B(P)} \textnormal{\textit{ with eigenvalue }} \lambda.
\end{equation}
\end{theorem}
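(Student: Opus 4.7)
The plan is to extend Bobok's argument (Theorem~\ref{thm:Bobok}) by dropping the summability hypothesis on the eigenvector, permitting the conjugate constant-slope map to live on an unbounded subinterval of $[-\infty,\infty]$. I treat the two directions separately.

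For necessity, suppose $\psi \colon [a,b] \to [a',b']$ is a homeomorphism conjugating $f$ to $g \in \CPMM$ of constant slope $\lambda$. Set $v_I := |\psi(I)|$, the Lebesgue length, for each $I \in \B(P)$. The Markov property of $g$ with respect to $\psi(P)$ decomposes $\psi(f(I)) = \bigcup_{J:\, T(I,J)=1} \psi(J)$ (disjoint up to endpoints), and the constant-slope identity $|\psi(f(I))| = \lambda |\psi(I)|$ yields the equation $\lambda v_I = \sum_J T(I,J) v_J$ in $[0,\infty]$. I then verify that each $v_I$ is finite: the basic intervals of $g$ are the components of $[a',b']\setminus\psi(P)$, so at most two can have infinite length (those abutting $\pm\infty$ when the image is unbounded). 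If any $v_{I_0}$ were infinite, then by topological mixing, for every $I$ there would be some $n$ with $T^n(I,I_0)=1$, forcing $v_I=\infty$ via the iterated eigenvalue relation $\lambda^n v_I \geq T^n(I,I_0) v_{I_0}$; this contradicts the geometric bound as soon as $f$ has more than two basic intervals (a regime in which the theorem has content). Hence $v \in \R^{\B(P)}$.

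For sufficiency, given a nonnegative eigenvector $v = (v_I)$, I first establish $v_I > 0$ for every $I$: topological transitivity yields irreducibility of $T$, and the inequality $\lambda^n v_I = \sum_J T^n(I,J) v_J \geq T^n(I,I_0) v_{I_0}$ with $v_{I_0}>0$ propagates strict positivity. Next I fix a reference point $x_0 \in P$, set $\psi(x_0):=0$, define $\psi$ on $P$ by declaring $\psi(x)$ to be the signed cumulative sum of $v_I$ over basic intervals $I$ lying between $x_0$ and $x$, and extend affinely inside each basic interval so that $|\psi(I)|=v_I$. This produces a strictly increasing continuous map $\psi \colon [a,b] \to [-\infty,\infty]$. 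Assuming $\psi$ is a homeomorphism onto the compact subinterval $[\psi(a),\psi(b)]$, the map $g := \psi \circ f \circ \psi^{-1}$ lies in $\CPMM$ and on each $\psi(I)$ satisfies $|g(\psi(I))| = |\psi(f(I))| = \sum_J T(I,J) v_J = \lambda v_I = \lambda|\psi(I)|$; since $g$ is continuous and monotone there, matching the length ratio $\lambda$ under our affine parametrization, $g$ is affine with slope $\pm\lambda$, as required.

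The principal obstacle is establishing that $\psi(x) \in \R$ for every $x \in (a,b)$, equivalently that $\sum_{I \subset [x_1,x_2]} v_I < \infty$ whenever $x_1 < x_2$ lie in the interior. A priori this sum could diverge at an accumulation point of $P$ in $(a,b)$, sending an interior point to $\pm\infty$ and destroying injectivity. I expect the argument to proceed by contradiction, using topological mixing and global continuity of $f$: if a small neighborhood $U$ of such an interior accumulation point carried infinite $v$-mass, then global continuity of $f^n$ would control the shape of $f^n(U)$ while mixing would force $f^n(U)$ to eventually cover any fixed basic interval $J_0$; iterating the eigenvalue equation and tracking how the infinite $v$-mass in $U$ must be redistributed should ultimately force $v_{J_0} = \infty$, contradicting $v \in \R^{\B(P)}$. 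Once this finiteness is in hand, surjectivity of $\psi$ onto $[\psi(a),\psi(b)]$ and the remaining verifications for $g$ are routine, and the mixing hypothesis is essential precisely here — consistent with the existence, promised in the introduction, of a transitive non-mixing counterexample.
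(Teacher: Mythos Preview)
Your necessity argument is fine and matches the paper. The sufficiency argument, however, contains a genuine gap at the step where you conclude that $g$ is affine on each $\psi(I)$.

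You define $\psi$ only on $P$ and then extend \emph{affinely} across each $P$-basic interval. With that choice, the claim ``matching the length ratio $\lambda$ under our affine parametrization, $g$ is affine with slope $\pm\lambda$'' is false. On $\psi(I)$ we have $g=\psi\circ f\circ\psi^{-1}$; the inner $\psi^{-1}$ is affine, but $f|_I$ is an arbitrary monotone homeomorphism onto $f(I)$, and $\psi$ is only \emph{piecewise} affine on $f(I)$ with slopes $v_J/|J|$ that differ from one $J\subset f(I)$ to the next. The composite is then piecewise monotone with the correct total length ratio, but there is no reason for it to be affine: knowing $|g(\psi(I))|=\lambda|\psi(I)|$ constrains only the endpoints of $g$, not its shape in between. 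Your $\psi$ does not encode how $f$ distributes $I$ among the various $J$'s it covers.

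This is precisely why the paper does \emph{not} extend affinely from $P$. Instead it defines $\psi$ on the full backward orbit $Q=\bigcup_{n\ge0}f^{-n}(P)$ via the weighted formula $\psi(x)=\pm\lambda^{-n}\sum v_{f^n(J)}$ over $P_n$-basic intervals $J$ between the basepoint and $x$; the eigenvector relation makes this consistent across levels and forces the exact expansion identity $|\psi(f(x))-\psi(f(x'))|=\lambda|\psi(x)-\psi(x')|$ on every interval of monotonicity (Lemma~\ref{lem:properties}\eqref{expansion}). The price is that $\psi$ is now defined only on a dense set, and the hard work becomes showing it has no jump discontinuities (``atoms''). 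That is the paper's real principal obstacle, handled through the half-point formalism (Section~\ref{sec:half}) and the delicate orbit argument of Lemma~\ref{lem:noatoms}, where mixing and global continuity are used in earnest. The finiteness issue you flag as the main obstacle is comparatively easy (Lemma~\ref{lem:properties}\eqref{finite}).
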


\begin{theorem}\label{thm:pcws}
In the piecewise continuous case (replacing $\CPMM$ by $\CPMM^{pc}$),
condition~\eqref{criterion} is necessary but not sufficient.
\end{theorem}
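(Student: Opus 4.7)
The theorem makes two separate claims about the piecewise continuous class $\CPMM^{pc}$: that condition~\eqref{criterion} is necessary for the existence of a conjugate constant slope map, and that it is not sufficient. My plan is to treat these independently. Necessity will proceed by a direct adaptation of the ``only if'' half of Bobok's Theorem~\ref{thm:Bobok}, now freed from any summability constraint because the target interval is no longer fixed to have length one. Non-sufficiency will rest on the explicit counterexample to be constructed in Section~\ref{sec:pcws}.

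For necessity, I will start from a homeomorphism $\psi$ conjugating $f\in\CPMM^{pc}$ (with partition $P$) to a constant slope $\lambda$ map $g\in\CPMM^{pc}$ on some $[a',b']\subset[-\infty,\infty]$. Since $\psi$ is order-preserving or order-reversing and $g$ is Markov with respect to $\psi(P)$, it induces a bijection of $\B(P)$ onto the basic intervals of $g$. I will set $v_I=|\psi(I)|$. The constant slope property gives $|g(\psi(I))|=\lambda v_I$, while the Markov condition together with $g(\psi(I))=\psi(f(I))$ identifies $g(\psi(I))$ with the essentially disjoint union $\bigcup_{J:\,T(I,J)=1}\overline{\psi(J)}$ of total length $\sum_J T(I,J)v_J$. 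Equating yields the eigenvector equation $\lambda v_I=\sum_J T(I,J)v_J$. The one technical subtlety is that at most two basic intervals---those abutting $\pm\infty$ in $[a',b']$---could a priori have infinite length; I would dispose of this either by showing such infinite entries are ruled out (for instance by iterating the eigenvector equation with $\lambda>1$ and exploiting transitivity to derive a contradiction) or by removing the two boundary basic intervals and arguing on the resulting sub-system.

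For non-sufficiency, I would produce, or more realistically defer to Section~\ref{sec:pcws} for the construction of, an explicit mixing map $f\in\CPMM^{pc}$ whose transition matrix admits a nonnegative real-valued eigenvector of eigenvalue $\lambda>1$ but admits no conjugate constant slope $\lambda$ map. This is the heart of the theorem and the main obstacle. The design strategy is to introduce a genuine discontinuity of $f$ at a point of $P$ and to engineer the Markov transitions together with the eigenvector so that the basic interval lengths dictated by $v$ are geometrically irreconcilable on the two sides of the discontinuity: any attempt to assemble them into a piecewise affine self-map of a compact subinterval of $[-\infty,\infty]$ must fail because the one-sided images at the jump force mutually inconsistent length relations. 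The verification that no conjugacy exists should then follow from the necessity direction just proved---which forces the lengths $|\psi(I)|$ to be proportional to the $v_I$---combined with a direct geometric contradiction showing that these prescribed lengths cannot be realized by any valid $g\in\CPMM^{pc}$ of constant slope $\lambda$.
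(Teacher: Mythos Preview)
Your overall plan matches the paper's: necessity is handled by Lemma~\ref{lem:Bobok}, and non-sufficiency by the explicit example of Section~\ref{sec:pcws}. For necessity your sketch is essentially right; the paper's way of ruling out infinite entries is to observe that the set $\mathcal{F}=\{I:|\psi(I)|=\infty\}$ is backward-closed under the Markov transitions (if $f(J)\supseteq I$ and $|\psi(I)|=\infty$ then $|\psi(J)|=\infty$), so by transitivity $\mathcal{F}$ is empty or all of $\B(P)$; in the latter case $g$ is affine of slope $\lambda>1$ on neighborhoods of $\pm\infty$, making $\pm\infty$ attracting and contradicting transitivity. Your alternative of ``removing the two boundary basic intervals'' would not yield an eigenvector for the original $T$, so that route should be dropped.

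Your description of the counterexample mechanism, however, misidentifies the obstruction. In Section~\ref{sec:pcws} the prescribed lengths $v_I$ \emph{can} be realized by a constant slope map---that is exactly the map $g$ of slope $2$ with $|A_i|=2^i+1$, $|B_i|=1$, and the same transition matrix as $f$. The point is that this $g$ is not conjugate to $f$: the nested intersection $\bigcap_{k\geq0} g^{-k}(A_k)$ has positive length (a wandering interval for $g$), whereas the corresponding intersection for $f$ shrinks to a point because $f$ allots only a unit-length portion of each $A_i$ to the preimage of $A_{i+1}$. The paper tracks the left endpoints $x_k$, $y_k$ of these nested sets and shows $x_k\to 2$ while $y_k\to 1$; since any conjugacy $\varphi$ would have to satisfy $\varphi(x_k)=y_k$ and $\varphi(2)=2$, continuity fails. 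So the discontinuities of $f$ matter not by making the lengths ``irreconcilable on two sides of a jump,'' but by freeing $f$ to distribute preimage proportions within each basic interval differently from any constant slope map---a freedom that the intermediate value theorem would remove in the globally continuous case.
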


\begin{theorem}\label{thm:notmixing}
If we replace the hypothesis of topological mixing with the weaker
condition of topological transitivity, then condition
\eqref{criterion} is necessary but not sufficient.
\end{theorem}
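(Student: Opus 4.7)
For necessity, the argument is independent of any transitivity hypothesis and mirrors the necessity direction of Theorem~\ref{thm:main}. If $\psi:[a,b]\to[c,d]$ is an order-preserving homeomorphism conjugating $f$ to a map $g\in\CPMM$ of constant slope $\lambda$, then for each $P$-basic interval $I$ I set $v_I:=|\psi(I)|$, the length of the image. The image $\psi(I)$ is a $\psi(P)$-basic interval of $g$, and by the Markov property combined with constant slope,
\begin{equation*}
\lambda v_I \;=\; |g(\psi(I))| \;=\; |\psi(f(I))| \;=\; \sum_{J\in\B(P)} T(I,J)\,v_J,
\end{equation*}
so $v=(v_I)$ is a nonnegative eigenvector of $T$ with eigenvalue $\lambda$. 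In contrast to Theorem~\ref{thm:Bobok}, I do not require $v\in\ell^1$, since the sum of entries equals $d-c$, which may be infinite.

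For insufficiency, the plan is to construct an explicit example in Section~\ref{sec:mix}: a map $f\in\CPMM$ that is topologically transitive but not topologically mixing, whose transition matrix $T$ admits a nonnegative eigenvector with some eigenvalue $\lambda>1$, but which is not conjugate to any constant slope map on any compact subinterval of $[-\infty,\infty]$. The strategy exploits the standard spectral structure of transitive, non-mixing interval maps: the domain splits into $k\geq 2$ pieces that $f$ permutes cyclically, with $f^k$ mixing on each piece. Any conjugating homeomorphism $\psi$ transports this decomposition to the hypothetical constant-slope model $g$, where each cyclic piece $J_i$ has some length $\ell_i$; the constant-slope hypothesis, combined with the fact that $g$ sends $J_i$ onto $J_{i+1\,\mathrm{mod}\,k}$ by a piecewise monotone map, forces rigid arithmetic compatibility relations among the $\ell_i$ and $\lambda$.

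The main obstacle is to engineer the combinatorics so that (i) a nonnegative eigenvector for the intended $\lambda$ does exist, yet (ii) no assignment of lengths to the cyclic pieces of $g$ is consistent with constant slope $\lambda$. This means balancing two competing demands: the transition matrix, viewed locally within each cyclic piece, should admit the eigenvector; but globally, the cyclic rotation must impose an incompatible constraint on the would-be lengths, such as forcing a proper subinterval of positive diameter to have length zero, or producing a ratio of piece lengths that cannot arise from piecewise affine maps of slope $\pm\lambda$. Once the example is built, the verification splits into three direct checks: transitivity but not mixing (read off the cyclic structure), existence of the eigenvector (solve a linear recurrence on $\B(P)$ directly), and failure of the conjugacy (derive a contradiction from the hypothetical lengths of the cyclic pieces using the geometric constraint above).
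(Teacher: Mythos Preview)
Your necessity sketch has a gap: you assert the argument is ``independent of any transitivity hypothesis,'' but you never verify that the entries $v_I=|\psi(I)|$ are \emph{finite}. If the endpoint of some $P$-basic interval is sent by $\psi$ to $\pm\infty$, then $v_I=\infty$ and $v$ is not a vector in $\R^{\B(P)}$. The paper's Lemma~\ref{lem:Bobok} uses transitivity in an essential way here: the set of basic intervals with infinite image is shown (via transitivity and the Markov property) to be either empty or all of $\B(P)$, and the latter is ruled out because constant slope $\lambda>1$ near $\pm\infty$ would make the endpoints attracting, contradicting transitivity.

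Your insufficiency plan is headed in a different direction from the paper's, and the mechanism you propose is not sharp enough to force a contradiction. You suggest that the cyclic piece lengths $\ell_i$ must satisfy ``rigid arithmetic compatibility relations'' coming from constant slope; but when $g$ sends $J_i$ onto $J_{i+1}$ by a genuinely \emph{piecewise} monotone map, constant slope only gives an inequality $\ell_{i+1}\le \lambda\,\ell_i$, not an equality, so there is no evident rigidity to exploit. The paper's obstruction is different and more direct: in the example $\tilde f$ of Section~\ref{sec:mix}, the partition intervals accumulate at the \emph{interior} point $0$ (the common boundary of the two cyclic pieces), and the unique eigenvector for the relevant $\lambda$ has entries bounded below by a positive constant. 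Hence any neighbourhood $(-\eps,\eps)$ contains infinitely many basic intervals whose $\psi$-images have total length $+\infty$; but a homeomorphism onto a subinterval of $[-\infty,\infty]$ must take finite values at interior points, a contradiction. The point is not an arithmetic mismatch among finite piece lengths, but that the unsummable part of the eigenvector is forced to sit at an interior accumulation point rather than at an endpoint.
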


\begin{remark}\label{rem:main}
Since the map $f$ in Theorem~\ref{thm:main} defines a topological
dynamical system without regard to geometry, there is no loss of
generality if we assume that $f\in\CPMM_{[0,1]}$. We will make this
assumption from now on.
\end{remark}

We will start by showing the necessity of condition~\eqref{criterion}.
The same proof applies in all cases (continuous or piecewise
continuous, mixing or transitive). Showing the sufficiency of
condition~\eqref{criterion} in Theorem~\ref{thm:main} requires much
more work. We give an explicit construction of the conjugating map
$\psi$ in several stages. Our construction begins very much like the
proofs of~\cite[Theorem 2.5]{Bo} and~\cite[Theorem 4.6.8]{ALM}, but
the unsummability of $v$ introduces some additional difficulties not
present in these previous works. It is the strength of global
continuity and topological mixing which allows us to overcome these
difficulties. The insufficiency of condition~\eqref{criterion} in
Theorems~\ref{thm:pcws} and~\ref{thm:notmixing} is proved by example
in Sections~\ref{sec:pcws} and~\ref{sec:mix}.

\section{The Proof Begins}\label{sec:begin}

\begin{lemma}\label{lem:Bobok}
\emph{(Bobok)} In Theorems~\ref{thm:main},~\ref{thm:pcws}, and
\ref{thm:notmixing}, condition~\eqref{criterion} is necessary.
\end{lemma}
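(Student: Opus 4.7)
The plan is to mimic Bobok's construction from Theorem~\ref{thm:Bobok}, reading the eigenvector $v$ off as the lengths of the images of the basic intervals under the conjugacy, while coping with the new possibility that those lengths are infinite. Given a conjugacy $\psi \colon [0,1] \to [a,b] \subseteq [-\infty,\infty]$ to a constant slope map $g := \psi \circ f \circ \psi^{-1}$ of slope $\lambda$, I set $v_I := |\psi(I)|$ for each $P$-basic interval $I$, using Lebesgue length in $\R$ and the convention $v_I = +\infty$ whenever $\psi(I)$ is unbounded (which happens for at most the two basic intervals adjacent to $\pm\infty$). The eigenvector equation in $[0,+\infty]$ then follows at once: since $g$ is affine of slope $\pm\lambda$ on $\psi(I)$, we have $|g(\psi(I))| = \lambda v_I$; and by the Markov property, $g(\psi(I))$ equals the union of those $\psi(J)$ with $T(I,J)=1$, up to a countable (hence Lebesgue-null) set of partition points, so $|g(\psi(I))| = \sum_J T(I,J)\, v_J$. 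Combining these gives $Tv = \lambda v$ in the extended-real sense.

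The main obstacle is upgrading $v$ to a genuine real-valued vector in $\R^{\B(P)}$, that is, excluding the possibility $v_I = +\infty$. The key is topological transitivity, which, via the Markov condition (so that $f^n(I)$ is always a union of basic intervals), forces $T$ to be irreducible: for every pair of basic intervals $I$ and $J$ there exists $n \geq 1$ with $T^n(I,J) \geq 1$. Let $A := \{I : v_I = +\infty\}$; then $|A| \leq 2$. Assuming $|\B(P)| \geq 3$, I can pick $I_0 \notin A$. If some $J \in A$ existed, I would choose $n$ with $T^n(I_0,J) \geq 1$; iterating the extended eigenvector equation then gives $\lambda^n v_{I_0} = (T^n v)_{I_0} \geq v_J = +\infty$, contradicting $v_{I_0} < \infty$. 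Hence $A = \emptyset$, and $v$ is the desired finite-valued eigenvector.

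The degenerate case $|\B(P)| \leq 2$ I would dispose of directly via finite-dimensional Perron-Frobenius: an irreducible $2 \times 2$ binary matrix $T$ admits a positive eigenvector for its Perron root, which must equal $\lambda$ since topological entropy $\log \lambda$ is preserved under conjugacy. The whole argument uses only transitivity, monotonicity and continuity on each basic interval, and the Markov condition, so it applies uniformly to Theorems~\ref{thm:main}, \ref{thm:pcws}, and \ref{thm:notmixing}.
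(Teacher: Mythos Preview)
Your approach is essentially the paper's: define $v_I = |\psi(I)|$, verify $Tv=\lambda v$ in the extended nonnegative reals, then use transitivity to exclude infinite entries. The paper phrases the last step as an all-or-nothing dichotomy for the set $\mathcal{F}=\{I:v_I=\infty\}$ (it is closed under taking $T$-preimages, hence empty or everything by irreducibility), then rules out $\mathcal{F}=\B(P)$ by observing that $g$ would be affine of slope $\lambda>1$ near each of $\pm\infty$, forcing an attracting fixed point or two-cycle at infinity, contradicting transitivity. Your counting observation $|A|\leq 2$ is a nice shortcut that makes the case $|\B(P)|\geq 3$ immediate, and your irreducibility argument (propagate finiteness forward along paths) is the same as the paper's (propagate infiniteness backward).

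There is, however, a genuine gap in your treatment of the residual case $|\B(P)|\leq 2$. Your appeal to entropy assumes $h_{top}(g)=\log\lambda$ for a constant-slope-$\lambda$ map on an extended interval, but this is false in general: the map $x\mapsto 2x$ on $[-\infty,\infty]$ has constant slope $2$ and entropy $0$. The classical identity $h_{top}=\log(\text{slope})$ uses that total length is finite, which is exactly what is at issue. The honest fix is either (a) to check by hand that a transitive $1$- or $2$-piece affine map on the extended line cannot have all pieces of infinite length (a short case analysis), or (b) to adopt the paper's uniform argument: if every $v_I=\infty$ then $g$ is affine with slope $\lambda>1$ on half-neighborhoods of both $\pm\infty$, so points near infinity move even closer to infinity, producing an attractor at $\{\pm\infty\}$ and contradicting transitivity. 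Option (b) dispenses with the case split altogether.
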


\begin{proof}
This proof is due to private communication with Jozef Bobok. As
mentioned before, we may suppose that $f$ is defined on the finite
interval $[0,1]$. Let $\psi$ be the conjugating map, $\psi\circ
f=g\circ\psi$. Define $v$ by $v_I=|\psi(I)|$, $I\in\B(P)$, where
$|\cdot|$ denotes the length of an interval. A priori, we may have
$|\psi(I)|=\infty$; this happens if and only if $I$ contains one of
the endpoints $0,1$ and $\psi$ maps this endpoint to one of
$\pm\infty$. (Recall that if $0,1$ are accumulation points of $P$,
then they are not endpoints of any $P$-basic interval). We want to
show that all the entries of $v$ are finite. Since $g$ is monotone
with slope of absolute value $\lambda$ on each $\psi(P)$-basic
interval, we have
\begin{equation}\label{stretchLambda}
|g(\psi(I))|=\lambda|\psi(I)|, \qquad I\in\B(P),
\end{equation}
where if one side of the equality is infinite then so is the other.
Let $\mathcal{F}$ denote the collection of all $P$-basic intervals $I$
such that $|\psi(I)|=\infty$. If $I\in\mathcal{F}$ and if
$f(J)\supseteq I$, then by the conjugacy of $f$, $g$ and by
equation~\eqref{stretchLambda}, it follows that $J\in\mathcal{F}$. Now
invoke topological transitivity and the Markov condition, and it
follows that either $\mathcal{F}=\emptyset$ or $\mathcal{F}=\B(P)$.

Suppose toward contradiction that $\mathcal{F}=\B(P)$. Then there are
neighborhoods of $\pm\infty$ on which $g$ is affine with constant
slope $\lambda>1$. It follows that at least one of the points
$\pm\infty$ is an attracting fixed point, or else they form an
attracting two-cycle (slope larger than 1 in a neighborhood of
infinity means that the images of points close to infinity are even
closer to infinity). This contradicts transitivity. We may conclude
that $\mathcal{F}=\emptyset$ and all entries of $v$ are finite.

We still need to show that $v$ is an eigenvector for $T$. Applying
equation~\eqref{stretchLambda} we have
\begin{equation*}
\lambda v_I = \lambda |\psi(I)| =
|g(\psi(I))|=|\psi(f(I))|=\sum_{J\subset
  f(I)}|\psi(J)|=\sum_{J\in\B(P)}T_{IJ}v_J.
\end{equation*}

\end{proof}

Now we begin the long work of proving the sufficiency of condition
\eqref{criterion} in Theorem~\ref{thm:main}. Let $f$, $T$, be as in
the statement of the theorem, fix $\lambda>1$, and suppose
$Tv=\lambda v$ for some nonzero vector
$v=\left(v_I\right)\in\R^{\B(P)}$ with nonnegative entries. We will
assume (by Remark~\ref{rem:main}) that $f\in\CPMM_{[0,1]}$. We will
construct a map $\psi:[0,1]\to[-\infty,\infty]$ which is a
homeomorphism onto its image in such a way that $g:=\psi\circ
f\circ\psi^{-1}$ has constant slope $\lambda$. Define the sets
\begin{equation*}
P_n=\bigcup_{i=0}^n f^{-i}(P), \, n\in\N, \quad Q=\bigcup_{i=0}^\infty f^{-i}(P)
\end{equation*}

The set $Q$ is backward invariant by construction and forward
invariant because $P$ is forward invariant. $Q$ is a dense subset of
$[0,1]$ because $f$ is mixing. Choose a basepoint $p_0\in P$ and
define $\psi$ on $Q$ by the formula
\begin{equation}\label{PsiOnQ}
\def\arraystretch{1.5}
\psi(x)=
\left\{
\begin{array}{c@{\hskip1em}l}
0, & \textnormal{ if }x=p_0 \\ \lambda^{-n}
\sum\limits_{\substack{J\in B(P^n)\\p_0<J<x}}v_{f^n(J)},
& \textnormal{ if } x\in P_n, x>p_0 \\ -\lambda^{-n}
\sum\limits_{\substack{J\in B(P^n)\\x<J<p_0}}v_{f^n(J)},
& \textnormal{ if } x\in P_n, x<p_0
\end{array}
\right.
\end{equation}

The choice of $p_0$ is somewhat arbitrary, but to simplify the proof
of Lemma~\ref{lem:properties}~\eqref{finite}, we insist that $0<p_0<1$
and that $p_0$ is an endpoint of some $P$-basic interval (i.e., $p_0$
is not a 2-sided accumulation point of $P$). This is possible because
$P$ is a closed, countable subset of $[0,1]$ and hence cannot be
perfect.

\begin{remark}\label{rem:gPhaseSpace}
In light of equation~\eqref{PsiOnQ}, we find that we are constructing 
a map $g$ on
\begin{itemize}
\item a finite interval $[a,b]$, if $\sum v_I<\infty$,
\item an extended half-line $[a,\infty]$, if $\sum_{I<p_0}<\infty$ and $\sum_{I>p_0}=\infty$,
\item an extended half-line $[-\infty,b]$, if $\sum_{l<p_0}=\infty$ and $\sum_{l>p_0}<\infty$, and
\item the extended real line $[-\infty,\infty]$, if $\sum_{l<p_0}=\infty$ and $\sum_{l>p_0}=\infty$.
\end{itemize}
\end{remark}

\begin{lemma}\label{lem:properties}
The function $\psi:Q\to[-\infty,\infty]$ has the following properties:
\begin{enumerate}[(i)]
\item\label{welldef} $\psi$ is well-defined; i.e. when $x\in P_{n_1}$
  and $x\in P_{n_2}$, the sums agree.
\item\label{mono} $\psi$ is strictly monotone increasing.
\item\label{expansion} If $x, x'\in Q$ belong to an interval of
  monotonicity of $f$, then
\begin{equation*}
|\psi(f(x))-\psi(f(x'))|=\lambda|\psi(x)-\psi(x')|,
\end{equation*}
where if one side of the equality is infinite, then so is the other.
\item\label{subexpansion} For arbitrary $x,x'\in Q$ we have
\begin{equation*}
|\psi(f(x))-\psi(f(x'))|\leq\lambda|\psi(x)-\psi(x')|,
\end{equation*}
and we allow for the possibility that one or both sides of this
inequality are infinite.
\item\label{finite} For $0<x<1$, $\psi(x)$ is finite.
\end{enumerate}
\end{lemma}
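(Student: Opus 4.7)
The plan is to verify the five properties in order, each building on the previous. For (i), if $x \in P_{n_1} \cap P_{n_2}$ with $n_1 < n_2$, I would argue that the two level sums agree by refinement: each $P_{n_1}$-basic interval $J$ with $p_0 < J < x$ decomposes into $P_{n_2}$-basic subintervals $K$, and since $f^{n_1}|_J$ is monotone, these correspond bijectively to $P_{n_2-n_1}$-basic subintervals of $f^{n_1}(J)$ with $f^{n_2}(K) = f^{n_2-n_1}(f^{n_1}(K))$. Iterating the eigenvector equation $Tv = \lambda v$ then gives $v_{f^{n_1}(J)} = \lambda^{-(n_2-n_1)}\sum_{K \subset J} v_{f^{n_2}(K)}$, and summing over $J$ matches the two expressions.

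For (ii), topological mixing together with the Markov property yields $(T^n)_{IJ} \ge 1$ for every pair $I, J \in \B(P)$ and some $n$, which combined with the eigenvector equation and $v \ne 0$ forces $v_I > 0$ for every $I$. Strict monotonicity on $Q$ is then immediate, since $x < x'$ in $P_n$ means the sum defining $\psi(x')$ contains at least one strictly positive term not present in the sum for $\psi(x)$. Property (iii) is a direct computation: the restriction $f|_I$ on a $P$-basic interval induces an order-preserving or order-reversing bijection of the $P_n$-basic subintervals of $(x, x')$ onto the $P_{n-1}$-basic subintervals of $(f(x), f(x'))$ with $f^n(J) = f^{n-1}(f(J))$, and the prefactor ratio $\lambda^{-(n-1)}/\lambda^{-n} = \lambda$ supplies the stretching identity. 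Property (iv) follows by subdividing $[x, x']$ into $P_n$-basic pieces at a level $n$ where both endpoints lie in $P_n$, applying (iii) to each piece, and summing using the triangle inequality in $\R$.

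The main obstacle is (v). My plan exploits the choice of $p_0$ as an endpoint of some $I_0 \in \B(P)$ together with both the Markov and mixing hypotheses. Fixing $x \in Q$ with $p_0 < x < 1$ (the case $x < p_0$ is symmetric), I distinguish two sub-cases. If $x$ lies in the interior of a $P$-basic interval $I = (a, b)$, I split $\psi(x) = \psi(a) + [\psi(x) - \psi(a)]$; the second summand is a sum over $P_n$-basic intervals in $(a, x) \subset I$, and since these are counted by a subset of the admissible length-$n$ sequences starting at $I$, we get
\[
\lambda^{-n} \sum_{\substack{J \in \B(P_n)\\ J \subset (a,x)}} v_{f^n(J)} \;\le\; \lambda^{-n}\sum_{K \in \B(P)} (T^n)_{I,K}\, v_K \;=\; v_I.
\]
This reduces the problem to showing $\psi(a) < \infty$ for $a \in P$ with $p_0 \le a < 1$. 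For this I would use mixing: $f^n(\overline{I_0})$ is a closed interval by the Markov property, and mixing forces its endpoints to converge to $0$ and $1$, so for sufficiently large $N$ we have $f^N(\overline{I_0}) \supseteq [p_0, a]$. Then every $P$-basic subinterval of $[p_0, a]$ appears in $f^N(I_0)$ with multiplicity at least one in $(T^N)_{I_0,\cdot}$, and so
\[
\psi(a) = \sum_{\substack{J \in \B(P)\\ J \subset [p_0, a]}} v_J \;\le\; \sum_{K \in \B(P)} (T^N)_{I_0,K}\, v_K = \lambda^N v_{I_0} < \infty.
\]
Combining these estimates with the well-definedness from (i) yields finiteness of $\psi(x)$.
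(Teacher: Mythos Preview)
Parts (i)--(iii) match the paper's arguments essentially line for line.

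For (iv) there is a gap. Your plan is to subdivide $[x,x']$ into $P_n$-basic pieces, apply (iii) to each, and sum via the triangle inequality. But $P$ is only assumed closed and countable, so $P_n\cap(x,x')$ may be infinite and may contain two-sided accumulation points; the $P_n$-basic pieces then fail to chain end-to-end from $x$ to $x'$, and the estimate $|\psi(f(x'))-\psi(f(x))|\le\sum_J|\psi(f(\sup J))-\psi(f(\inf J))|$ is not a triangle inequality in any obvious sense---at an accumulation point $z$ of $P_n$ you would implicitly need $\psi(f(z_i))\to\psi(f(z))$, i.e.\ continuity of $\psi$, which is precisely what is not yet available at this stage. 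The paper avoids this by arguing directly from the defining formula for $\psi$: both sides of the desired inequality are sums indexed by $P_n$-basic (respectively $P_{n-1}$-basic) intervals, and by the intermediate value theorem the assignment $J\mapsto f(J)$ hits every $P_{n-1}$-basic $K$ lying between $f(x)$ and $f(x')$ at least once, so the inequality follows term by term with no chaining at all.

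For (v) your argument is correct and takes a genuinely different route from the paper. The paper uses mixing and the intermediate value theorem to locate points $x_1,x_2$ in the fixed $P$-basic interval $J_0$ adjacent to $p_0$ with $f^n(x_1)=x$ and $f^n(x_2)=p_0$, and then invokes (iv) $n$ times to get $|\psi(x)|\le\lambda^n|\psi(x_1)-\psi(x_2)|\le\lambda^n v_{J_0}$. You instead bypass (iv) entirely and work directly with $T^Nv=\lambda^Nv$: the bound $\psi(x)-\psi(a)\le v_I$ comes from counting cylinders inside a single $P$-basic interval $I$, and the bound $\psi(a)\le\lambda^N v_{I_0}$ comes from $f^N(I_0)\supseteq[p_0,a]$ (which mixing plus continuity give, since $f^N(I_0)$ is an interval that eventually meets every neighborhood of $0$ and of $1$). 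This is arguably more elementary, and it has the pleasant side effect that your proof of (v) is unaffected by the gap in (iv). One small remark: that $f^n(\overline{I_0})$ is a closed interval follows from continuity of $f$ alone; the Markov property is not needed there.
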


\begin{proof}
\begin{enumerate}[(i)]
\item Suppose $K\in\B(P_n)$. Then $f^n|K$ is monotone and
  $f^n(K)\in\B(P)$. Therefore
\begin{multline*}
\qquad
\lambda^{-n-1}\sum\limits_{\substack{J\in\B(P_{n+1})\\J\subset
    K}}v_{f^{n+1}(J)} =
\lambda^{-n-1}\sum\limits_{\substack{J\in\B(P_1)\\J\subset
    f^n(K)}}v_{f(J)} = \\ = \lambda^{-n-1}\sum\limits_{J\in\B(P_0)}
T_{f^n(K)J} v_J = \lambda^{-n-1} \lambda v_{f^n(K)} = \lambda^{-n}
v_{f^n(K)}
\end{multline*}
This shows that $\psi$ is well-defined.
\item We will use the nonnegativity of the eigenvector $v$ together
  with the mixing hypothesis to show that the entries of $v$ must be
  strictly positive. Strict monotonicity of $\psi$ then follows from
  the definition. Since $v$ is not the zero vector, there must be some
  $P$-basic interval $I_0$ with $v_{I_0}\neq0$. Let $I\in\B(P)$. By
  the mixing hypothesis, there is $n\in\N$ such that
  $(T^n)_{II_0}\neq0$. Then $v_I=\lambda^{-n}\sum_J (T^n)_{IJ} v_J
  \geq \lambda^{-n} v_{I_0}>0$.
\item For $x,x'\in Q$ there exists a common value $n\geq1$
 such that $x,x'\in P_n$ (since $P_0\subset P_1\subset P_2
 \subset \cdots$).  Then $f(x),f(x')\in P_{n-1}$. By the monotonicity of 
 $f$ between $x,x'$, the assignment $K=f(J)$ defines a bijective correspondence
 \begin{equation*}
 \qquad \left\{J\in\B(P_n):J\mbox{ between }x,x'\right\} \quad \longleftrightarrow
 \quad \left\{K\in\B(P_{n-1}):K\mbox{ between }f(x),f(x')\right\}.
 \end{equation*}
 By the definition of $\psi$ we may sum over those sets and obtain
 \begin{multline*}
 \qquad |\psi(f(x))-\psi(f(x'))|=\sum_{K}\lambda^{-(n-1)} v_{f^{n-1}(K)}= \\
 = \sum_{J}\lambda^{-(n-1)}v_{f^{n-1}(f(J))}=\lambda|\psi(x)-\psi(x')|.
 \end{multline*}
\item This is the inequality that survives from~\eqref{expansion} when
  we allow for folding between $x$ and $x'$.  To see it, we imitate the proof
  of~\eqref{expansion}, noticing that by the intermediate value theorem the
  assignment $K=f(J)$ attains every interval $K$ between $f(x),f(x')$ at least once.
\item Let $x$ be given, $0<x<1$. Assume $x<p_0$; the proof when
  $x>p_0$ is similar. Fix a $P$-basic interval $J_0$ with $p_0$ at one
  endpoint. Because $f$ is mixing, there exists $n$ such that $J_0\cap
  f^{-n}((p_0,1))\neq\emptyset$ and $J_0\cap
  f^{-n}((0,x))\neq\emptyset$. By the intermediate value theorem there
  exist $x_1,x_2\in J_0$ with $f^n(x_1)=x$ and $f^n(x_2)=p_0$. By
 ~\eqref{subexpansion} applied $n$ times, $|\psi(x)|\leq
  \lambda^n|\psi(x_2)-\psi(x_1)|$. But by~\eqref{mono},
  $|\psi(x_2)-\psi(x_1)|\leq|\psi(\sup J_0)-\psi(\inf J_0)|$. At the
  two endpoints of $J_0$, $\psi$ takes the finite values $0$ and
  $v_{J_0}$ (or possibly $-v_{J_0}$).
\end{enumerate}
\end{proof}

The main problem to tackle before we can extend $\psi$ to the desired
homeomorphism is to show that the map we have defined so far has no
jump discontinuities.

\begin{problem}\label{prob:jump}
Show that for each $x\in [0,1]$,
\begin{equation*}
\inf \psi(Q\cap(x,1]) = \sup \psi(Q\cap[0,x)),
\end{equation*}
except that for $x=0$ we write $\psi(0)$ in place of the supremum and
for $x=1$ we write $\psi(1)$ in place of the infimum.
\end{problem}

The resolution of this problem makes essential use of the global
continuity of $f$ as well as the order structure of the interval
$[0,1]$. Moreover, special treatment is required for the points $x\in
Q$ -- we must show the continuity of $\psi$ from each side separately.
We do this by introducing a notion of ``half-points.''
\footnote{It is a slight modification of the construction
  from~\cite{M-ihes}. However, really the idea goes back to the
  International Mathematical Olympiad in 1965, where the Polish team
  was making jokes about the half-points $[a,a)$ and $(a,a]$.}

\section{Half-Points}\label{sec:half}

Construct the sets
\begin{equation*}
\tilde{Q}=\left(Q \times \{+,-\}\right)\setminus\{(0,-),(1,+)\}, \quad
S= \left([0,1] \setminus Q\right) \cup \tilde{Q}
\end{equation*}

The way to think of this definition is that we are splitting each
point $x\in Q$ into the two \emph{half-points} $(x,+)$ and $(x,-)$.
$S$ is the interval $[0,1]$ with each point of $Q$ replaced by
half-points. We use boldface notation to represent points in $S$,
whether half or whole. Thus, $\boldsymbol{x}$ may mean $x$ or $(x,+)$
or $(x,-)$, depending on the context.

Let us extend the dynamics of $f$ from $[0,1]$ to $S$. Recall that $Q$
is both forward and backward invariant. On $S\setminus \tilde{Q} =
[0,1]\setminus Q$ we keep the map $f$ without change. To extend $f$
from $Q$ to $\tilde{Q}$ we define a notion of the orientation of the
map at half-points. We say that $f$ is \emph{orientation-preserving}
(resp. \emph{orientation-reversing}) at the half-point $(x,+)$ if some
half-neighborhood $[x,x+\eps)$ is contained in some $J\in\B(P)$ with
  $f|_J$ increasing (resp. decreasing). For a half-point $(x,-)$, the
  definition is the same, except that we look at a half-neighborhood
  of the form $(x-\eps,x]$. It is not clear how to decide if $f$ is
orientation-preserving or orientation-reversing at the accumulation
points of $P$. It may happen that every half-neighborhood of $x$
contains $f(x)$ in the interior of its image, so that neither
definition is appropriate. Nevertheless, we define the extended map
$f$ on $\tilde{Q}$ by the following formula:
\begin{equation}\label{extendf}
\begin{aligned}
f(x,+)&=
\begin{cases}
(f(x),+),&\textnormal{ if } f \textnormal{ is orientation-preserving
    at } (x,+) \\ (f(x),-),&\textnormal{ if } f \textnormal{ is
    orientation-reversing at } (x,+) \\ (f(x),+),&\textnormal{ if }
  \forall_{\eps>0}\;\exists_{x'\in P\cap[x,x+\eps)}\;f(x')>f(x)
    \\ (f(x),-),&\textnormal{ otherwise}
\end{cases}\\
f(x,-)&=
\begin{cases}
(f(x),+),&\textnormal{ if } f \textnormal{ is orientation-reversing at
  } (x,-) \\ (f(x),-),&\textnormal{ if } f \textnormal{ is
    orientation-preserving at } (x,-) \\ (f(x),+),&\textnormal{ if }
  \forall_{\eps>0}\; \exists_{x'\in P\cap(x-\eps,x]}\; f(x')>f(x)
  \\ (f(x),-),&\textnormal{ otherwise}
\end{cases}\\
\end{aligned}
\end{equation}

Let us say a few words about the ``otherwise'' cases. Consider a
half-point $(x,+)$ which does not fit into any of the first three
cases. We claim that for such a point, $\forall_{\eps>0}\; \exists_
{x'\in P\cap[x,x+\eps)}\; f(x')<f(x)$. If not, we would have to
conclude that $\exists_{\eps>0} \; \forall_{x'\in P\cap[x,x+\eps)}\;
f(x')=f(x)$. But this is impossible, because the half-neighborhood
$[x,x+\eps)$ must contain some $P$-basic interval $J$, and by 
the strict monotonicity of $f|_J$ the two endpoints of this interval 
have distinct images. Similarly, if a half-point $(x,-)$ falls
into the ``otherwise'' case, then $\forall_{\eps>0}\;
\exists_{x'\in P\cap(x-\eps,x]}\; f(x')<f(x)$. This is relevant
in the proofs of Lemmas~\ref{lem:atomgrowth}
and~\ref{lem:intermediate}.

Now we define a real-valued function $\Delta_\psi$ on $S$ by the formula
\begin{equation*}
\Delta_\psi(\boldsymbol{x}) =
\begin{cases}
\inf \psi(Q\cap(x,1]) - \psi(x) & \textnormal{ if }
\boldsymbol{x}=(x,+) \in \tilde{Q}\\ \psi(x) - \sup \psi(Q\cap[0,x)) &
  \textnormal{ if } \boldsymbol{x}=(x,-) \in \tilde{Q}\\ \inf
  \psi(Q\cap(x,1]) - \sup \psi(Q\cap[0,x)) & \textnormal{ if }
  \boldsymbol{x} = x \in S \setminus \tilde{Q}
\end{cases}
\end{equation*}

If $\Delta_\psi(\boldsymbol{x})>0$, then we say that $\boldsymbol{x}$
is an \emph{atom} for $\psi$ and $\Delta_\psi(\boldsymbol{x})$ is its
\emph{mass}. In this language, Problem~\ref{prob:jump} asks us to show
that $\psi$ has no atoms.

The next lemma is an analog of
Lemma~\ref{lem:properties}~\eqref{expansion} for a single point (or
half-point) $\boldsymbol{x}$. We introduced half-points for the
purpose of proving this lemma even at the folding points of $f$.

\begin{lemma}\label{lem:atomgrowth}
Let $\boldsymbol{x}\in S$. Then
$\Delta_\psi(f(\boldsymbol{x}))=\lambda\Delta_\psi(\boldsymbol{x})$.
\end{lemma}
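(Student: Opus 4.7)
The plan is to verify the identity by case analysis on $\boldsymbol{x}$, distinguishing whole points from half-points and, for half-points, matching the four branches of~\eqref{extendf}. The workhorse is Lemma~\ref{lem:properties}(\ref{expansion}), which gives the exact expansion factor $\lambda$ for any pair in a common monotonicity interval; the only situations where such a pair is unavailable turn out to be situations where both sides of the identity vanish. Suppose first that $\boldsymbol{x} = x \in S \setminus \tilde{Q}$, so $x \notin Q \supseteq P$ and $x$ lies in the interior of some $P$-basic interval $J$ on which $f$ is monotone. Choose $y_k^\pm \in Q \cap J$ with $y_k^+ \downarrow x$, $y_k^- \uparrow x$, and $\psi(y_k^\pm)$ converging to the infimum and supremum in the definition of $\Delta_\psi(x)$. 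Applying Lemma~\ref{lem:properties}(\ref{expansion}) to $(y_k^+, y_k^-)$ in the monotonicity interval $\bar{J}$ yields $|\psi(f(y_k^+)) - \psi(f(y_k^-))| = \lambda(\psi(y_k^+) - \psi(y_k^-))$; continuity and orientation of $f|_J$ drive $f(y_k^\pm) \to f(x)$ from opposite sides, so the left side converges to $\Delta_\psi(f(x))$.

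If $\boldsymbol{x} = (x,+) \in \tilde{Q}$ falls into case 1 or 2 of~\eqref{extendf}, then a right half-neighborhood of $x$ lies in some basic interval $J$. Pick $y_k \downarrow x$ in $Q \cap J$ with $\psi(y_k) \downarrow \inf \psi(Q \cap (x,1])$ and apply Lemma~\ref{lem:properties}(\ref{expansion}) to $(x, y_k)$. The orientation of $f|_J$ decides whether $f(y_k) \to f(x)^+$ or $f(y_k) \to f(x)^-$, which is precisely the $\pm$ sign that~\eqref{extendf} assigns to $f(x,+)$, and the limit gives $\Delta_\psi(f(x,+)) = \lambda \Delta_\psi(x,+)$. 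The half-point $(x,-)$ is symmetric.

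The delicate situation is a half-point $(x,+)$ in case 3 or 4: no basic interval of $P$ starts at $x$ on the right, forcing $x$ to be a right-accumulation point of $P$ and in particular $x \in P$. The claim is that \emph{both} sides of the identity vanish. For the input side, fix $x_k \in P$ with $x_k \downarrow x$; formula~\eqref{PsiOnQ} at level $n = 0$ yields
\begin{equation*}
\psi(x_k) - \psi(x) = \sum_{J \in \B(P),\, J \subset (x, x_k)} v_J.
\end{equation*}
The nested decreasing family $\mathcal{J}_k = \{J \in \B(P) : J \subset (x, x_k)\}$ has empty intersection, since any fixed $J$ with left endpoint $a_J \in P \cap (x,1)$ is excluded once $x_k < a_J$. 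Lemma~\ref{lem:properties}(\ref{finite}) makes the first sum finite, so the tail of a convergent series forces $\psi(x_k) - \psi(x) \to 0$; combined with $\psi(x_k) \downarrow \inf \psi(Q \cap (x,1])$, this gives $\Delta_\psi(x,+) = 0$. For the image side, the ``otherwise'' analysis preceding~\eqref{extendf} furnishes $x_k \in P$ with $x_k \downarrow x$ and $f(x_k) > f(x)$ in case 3 (respectively $f(x_k) < f(x)$ in case 4). Global continuity of $f$ sends $f(x_k)$ to $f(x)$ from the side matching the $\pm$ in $f(x,+) = (f(x), \pm)$, and since $f(x_k) \in P$, the point $f(x)$ is a one-sided accumulation point of $P$ on that same side. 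The same tail-sum argument now shows $\Delta_\psi(f(x,+)) = 0$.

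The main obstacle is cases 3--4: a direct expansion argument is unavailable because $x$ and any point approaching it from the right cannot share a single monotonicity interval of $f$ when $x$ accumulates $P$. The fix is the recognition that both $\Delta_\psi(x,+)$ and $\Delta_\psi(f(x,+))$ must vanish, a conclusion that combines the finiteness of $\psi$ from Lemma~\ref{lem:properties}(\ref{finite}) (to enable the tail-of-series estimate) with global continuity of $f$ (to transfer the one-sided accumulation of $P$ from $x$ to $f(x)$).
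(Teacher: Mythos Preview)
Your proof is correct and follows essentially the same route as the paper's: the same three-way case split (whole points, half-points adjacent to a basic interval, half-points at accumulation points of $P$), with Lemma~\ref{lem:properties}\eqref{expansion} handling the first two cases and the tail-of-a-convergent-series argument (via Lemma~\ref{lem:properties}\eqref{finite} and forward invariance of $P$) showing both sides vanish in the third. The only differences are presentational---you bundle cases~3 and~4 of~\eqref{extendf} together while the paper treats case~3 in detail and defers the rest as ``similar.''
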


\begin{proof}
Consider first the case when $\boldsymbol{x}=x$ is a whole-point, i.e.
$\boldsymbol{x}\in S\setminus\tilde{Q}$. Then $x$ belongs to the
interior of some $P$-basic interval $J$. We may choose a sequence
$y_i$ in $Q\cap J$ converging to $x$ from the left-hand side, and a
sequence $z_i$ in $Q\cap J$ converging to $x$ from the right-hand
side. Then $f(y_i)$ and $f(z_i)$ are sequences in $Q$ converging to
$f(x)$ from opposite sides. By the monotonicity of $\psi$ and the
definition of $\Delta_\psi$ we have
$|\psi(z_i)-\psi(y_i)|\to\Delta_\psi(\boldsymbol{x})$ and
$|\psi(f(z_i))-\psi(f(y_i))|\to\Delta_\psi(f(\boldsymbol{x}))$. Since
$J$ is an interval of monotonicity of $f$, the result follows from
Lemma~\ref{lem:properties}~\eqref{expansion}.

Now consider the case when $\boldsymbol{x}=(x,+)$ or
$\boldsymbol{x}=(x,-)$, and suppose an appropriate half-neighborhood
of $\boldsymbol{x}$ is contained in a single $P$-basic interval $J$ so
that $f$ is either orientation-preserving or orientation-reversing at
$\boldsymbol{x}$. We may repeat the proof from the previous case, with
one modification. If $\boldsymbol{x}=(x,+)$, then we take $y_i$ to be
instead the constant sequence with each member equal to $x$. If
$\boldsymbol{x}=(x,-)$, then we take $z_i$ to be instead the constant
sequence with each member equal to $x$. Then the rest of the proof
holds as written.

Now consider the case when $\boldsymbol{x}=(x,+)$ and
$f(\boldsymbol{x})=(f(x),+)$, but every half-neighborhood $[x,x+\eps)$
meets $P$. We will show in this case that
$\Delta_\psi(\boldsymbol{x})$ and $\Delta_\psi(f(\boldsymbol{x}))$
are both zero. Choose points $z_i\in P$ which converge monotonically
to $x$ from the right and such that each $f(z_i)>f(x)$. By
continuity, $f(z_i)\to f(x)$, and after passing to a subsequence, we
may assume that this convergence is also monotone. Now we calculate
$\Delta_\psi(\boldsymbol{x})$ using the sequence $z_i$ and appealing
back to the definition of $\psi$.
\begin{equation*}
\Delta_\psi(\boldsymbol{x}) = \lim_{i\to\infty} (\psi(z_i)-\psi(x)) =
\lim_{i\to\infty} \sum_{\substack{J\in B(P)\\x<J<z_i}}v_J =
\lim_{i\to\infty} \sum_{j=i}^\infty \sum_{\substack{J\in
    B(P)\\z_{j+1}<J<z_j}} v_J = 0
\end{equation*}
The rearrangement of the sum is justified because for each $P$-basic
interval $J$ between $x$ and $z_i$ there is exactly one $j\geq i$ such
that $J$ lies between $z_{j+1}$ and $z_j$. But by
Lemma~\ref{lem:properties}~\eqref{finite}, when $i=1$ we have already
a convergent series. Thus, when we sum smaller and smaller tails of
the series, we obtain $0$ in the limit. We may apply exactly the same
argument to compute $\Delta_\psi(f(\boldsymbol{x}))$ along the
sequence $f(z_i)$, because these points also belong to the invariant
set $P$ and decrease monotonically to $f(x)$.

There are three other cases in which every appropriate
half-neighborhood of $\boldsymbol{x}$ meets $P$; again in each of
these cases $\Delta_\psi(\boldsymbol{x})=0$ and
$\Delta_\psi(f(\boldsymbol{x}))=0$ by similar arguments.
\end{proof}

The next lemma shows that the intermediate value theorem respects
our definition of half-points.

\begin{lemma}\label{lem:intermediate}
Let $x_1<x_2$ be any two points in $[0,1]$, not necessarily in $Q$,
and let $k\in\N$. Suppose that there exists a point $\boldsymbol{y}\in
S$ with $y$ strictly between $f^k(x_1)$ and $f^k(x_2)$. Then there
exists $\boldsymbol{x}\in S$ with $x$ between $x_1$ and $x_2$ such
that $f^k(\boldsymbol{x})=(\boldsymbol{y})$.
\end{lemma}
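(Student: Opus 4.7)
My plan is to induct on $k$. The base case $k=1$ contains all the real content and proceeds by a case analysis according to the nature of $\boldsymbol{y}$. The inductive step from $k-1$ to $k$ is short: apply the base case to the pair $\bigl(f^{k-1}(x_1),f^{k-1}(x_2)\bigr)$ and the target $\boldsymbol{y}$ to produce some $\boldsymbol{x}'\in S$ with $x'$ strictly between $f^{k-1}(x_1)$ and $f^{k-1}(x_2)$ and $f(\boldsymbol{x}')=\boldsymbol{y}$; then apply the inductive hypothesis to pull $\boldsymbol{x}'$ back to an $\boldsymbol{x}$ with $x$ between $x_1$ and $x_2$ and $f^{k-1}(\boldsymbol{x})=\boldsymbol{x}'$, whence $f^k(\boldsymbol{x})=\boldsymbol{y}$. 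It is essential here that the base case deliver $\boldsymbol{x}$ with $x$ \emph{strictly} between the endpoints, which I will verify.

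For the base case, I may assume $f(x_1)<y<f(x_2)$ (the reverse inequality is symmetric). If $\boldsymbol{y}=y\in S\setminus\tilde{Q}$ is a whole-point, i.e.\ $y\notin Q$, the classical intermediate value theorem produces $x^*\in(x_1,x_2)$ with $f(x^*)=y$, and forward-invariance of $Q$ (the contrapositive of $f(Q)\subseteq Q$) gives $x^*\notin Q$; so $\boldsymbol{x}:=x^*$ works. If instead $\boldsymbol{y}=(y,-)\in\tilde{Q}$, set
\[
x^*:=\inf\bigl\{x\in[x_1,x_2]:f(x)\geq y\bigr\}.
\]
Continuity of $f$ together with the strict inequalities at the endpoints forces $x^*\in(x_1,x_2)$, $f(x^*)=y$, and $f(x)<y$ for all $x\in[x_1,x^*)$; in particular $x^*\in f^{-1}(Q)\subseteq Q$, so $(x^*,-)\in S$. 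I claim $\boldsymbol{x}:=(x^*,-)$ satisfies $f(\boldsymbol{x})=(y,-)$. Inspecting the four alternatives in \eqref{extendf} for $f(x^*,-)$: the orientation-reversing alternative would require $f>y$ just to the left of $x^*$, contradicting the construction; the alternative ``$\forall\,\eps>0,\;\exists\,x'\in P\cap(x^*-\eps,x^*]$ with $f(x')>f(x^*)$'' also fails, since any such $x'<x^*$ has $f(x')<y$ while $x'=x^*$ gives $f(x')=y$. Hence $f(x^*,-)$ falls into either the orientation-preserving case or the final ``otherwise'' case, both of which assign $(f(x^*),-)=(y,-)$, as required. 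The remaining case $\boldsymbol{y}=(y,+)$ is handled symmetrically by replacing $x^*$ with $x^{**}:=\sup\{x\in[x_1,x_2]:f(x)\leq y\}$ and setting $\boldsymbol{x}:=(x^{**},+)$.

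The main obstacle is precisely this verification that the correct branch of the piecewise definition \eqref{extendf} is selected at the extreme preimage. It is the one-sided strict inequality $f<y$ on $[x_1,x^*)$ (respectively $f>y$ on $(x^{**},x_2]$), forced by the infimum (respectively supremum) construction, that simultaneously kills the two unwanted branches. The rest is a routine combination of the classical intermediate value theorem, the invariance of $Q$, and the order structure already developed in Lemma~\ref{lem:properties}.
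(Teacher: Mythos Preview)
Your proof is correct and shares its core idea with the paper's: locate the extreme preimage of $y$ in $[x_1,x_2]$ and use the one-sided strict inequality there to force the correct branch of~\eqref{extendf}. The paper does this in one stroke for arbitrary $k$, setting $A=[x_1,x_2]\cap f^{-k}(y)$ and taking $\boldsymbol{x}$ to be $(\min A,-)$ or $(\max A,+)$; it then simply asserts, from $f^k(x')<y$ for $x'\in(x_1,\min A)$, that $f^k(\min A,-)=(y,-)$. Your induction on $k$ is a cleaner way to justify exactly this assertion: by peeling off one application of $f$ at a time, you only ever need to check the four alternatives of~\eqref{extendf} for $f$ itself, and your explicit elimination of the orientation-reversing and ``$\exists\,x'\in P$ with $f(x')>f(x)$'' branches makes rigorous what the paper leaves to the reader. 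Your $x^*$ for $k=1$ is precisely the paper's $\min A$, and your insistence that the base case yield $x$ \emph{strictly} between the endpoints is exactly what the inductive step requires. So the two arguments are essentially the same, with yours supplying the details the paper suppresses.
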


\begin{proof}
If $\boldsymbol{y}=y\in S\setminus\tilde{Q}$, we just apply the
invariance of $Q$ and the usual intermediate value theorem. If
$\boldsymbol{y}\in\tilde{Q}$, then we consider the set
$A=[x_1,x_2]\cap f^{-k}(y)$. It is nonempty by the usual intermediate
value theorem, compact by the continuity of $f^k$, and contained in
$Q$ by the invariance of $Q$. Suppose first that $f^k(x_1)<f^k(x_2)$.
If $x'$ satisfies $x_1<x'<\min A$, then $f^k(x')<y$ by the usual
intermediate value theorem and the minimality of $\min A$. It follows
that $f^k(\min A, -) = (y, -)$. Similarly, $f^k(\max A,+)=(y,+)$. Thus
$\boldsymbol{x}$ may be taken as one of the points $(\min A,-), (\max
A,+)$. The proof when $f^k(x_1)>f^k(x_2)$ is similar, except that
$f^k(\min A,-)=(y,+)$ and $f^k(\max A,+)=(y,-)$.
\end{proof}

\section{No Atoms}\label{sec:atom}

Now we are ready to solve Problem~\ref{prob:jump}.

\begin{lemma}\label{lem:noatoms}
$\psi$ has no atoms; that is, $\Delta_\psi$ is identically zero.
\end{lemma}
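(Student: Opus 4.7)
The plan is to suppose an atom exists and derive a contradiction by producing, inside a compact subinterval of $(0,1)$, more atoms of mass at least $c$ than the total atom budget allows. So assume $\boldsymbol{x}_0 \in S$ has $\Delta_\psi(\boldsymbol{x}_0) = c > 0$, and set $\boldsymbol{y}_M := f^M(\boldsymbol{x}_0)$; by Lemma~\ref{lem:atomgrowth} these are atoms of mass $\lambda^M c$. I would fix $a, b \in Q \cap (0,1)$ (non-empty since $Q$ is dense), so that $D := \psi(b) - \psi(a)$ is finite by Lemma~\ref{lem:properties}~\eqref{finite}. Since $\psi$ is monotone on $Q \cap [a,b]$ with range in $[\psi(a),\psi(b)]$, the sum of $\Delta_\psi(\boldsymbol{z})$ over all atoms $\boldsymbol{z}$ with $z \in (a,b)$ is bounded by $D$.

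I would then fix an integer $k > D/c$ and choose open intervals $U_1,\ldots,U_k \subset (a,b)$ with pairwise disjoint closures. The key step is to find a single integer $M$ with $y_M \in (0,1)$ for which $f^M(U_i) = [0,1]$ for every $i$. Given such $M$, for each $i$ I would select $x_1,x_2 \in U_i$ with $f^M(x_1) < y_M < f^M(x_2)$, and invoke Lemma~\ref{lem:intermediate} to produce $\boldsymbol{z}_i \in S$ between $x_1$ and $x_2$ (hence with $z_i \in U_i$) satisfying $f^M(\boldsymbol{z}_i) = \boldsymbol{y}_M$. By Lemma~\ref{lem:atomgrowth}, $\Delta_\psi(\boldsymbol{z}_i) = \lambda^{-M}\cdot\lambda^M c = c$, so the $\boldsymbol{z}_i$ form $k$ distinct atoms inside $(a,b)$ of total mass $kc > D$, contradicting the bound above.

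Two facts must be checked to select this $M$. First, $y_M \in (0,1)$ for all but at most two indices, because the only half-points in $S$ with $x$-coordinate in $\{0,1\}$ are $(0,+)$ and $(1,-)$, and if some half-point $\boldsymbol{p}$ appeared at both $\boldsymbol{y}_{M_1}$ and $\boldsymbol{y}_{M_2}$ with $M_1<M_2$, Lemma~\ref{lem:atomgrowth} would force $\lambda^{M_2-M_1}=1$, impossible for $\lambda>1$. Second, for each $U_i$ there should exist $N_i$ with $f^n(U_i) = [0,1]$ whenever $n \geq N_i$; this is the standard ``locally eventually onto'' consequence of topological mixing for continuous self-maps of $[0,1]$, proved by using mixing to push $\inf f^n(U_i) \to 0$ and $\sup f^n(U_i) \to 1$, and then using the surjectivity of $f$ (also a consequence of mixing) to absorb the endpoints. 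Taking $N^* := \max_i N_i$, any $M \geq N^*$ with $y_M \in (0,1)$ works. The anticipated obstacle is carefully justifying this locally-eventually-onto step in our countable-Markov $\CPMM$ setting, but the classical continuous-interval argument should apply unchanged given global continuity of $f$.
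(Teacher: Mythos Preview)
Your argument has a genuine gap at the ``locally eventually onto'' step, and this gap cannot be repaired within your framework.

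The claim that a continuous topologically mixing self-map of $[0,1]$ is locally eventually onto is \emph{false} in the countably piecewise monotone setting of this paper. In fact, the paper itself furnishes a counterexample: the map $f$ constructed in Section~\ref{sec:extended} is continuous and topologically mixing on $[0,1]$, yet it satisfies $f(0)=0$, $f(1)=1$, and $f^{-1}(\{0,1\})=\{0,1\}$ (the endpoints have no preimages in $(0,1)$ because on $(0,1)$ the map is conjugate to a self-map of $\R$). Consequently $f^n(U)\subset(0,1)$ for every $U\subset(0,1)$ and every $n$, so $f$ is not locally eventually onto. The ``classical continuous-interval argument'' you allude to establishes only that $f^n(U)\supset[\epsilon,1-\epsilon]$ for all large $n$; promoting this to $f^n(U)=[0,1]$ requires an interior preimage of each endpoint, which need not exist here. (The implication mixing $\Rightarrow$ leo is standard for \emph{finitely} piecewise monotone maps, but that is exactly the hypothesis we lack.)

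This is not a cosmetic issue. Even if you weaken your requirement to $f^M(U_i)\supset[\epsilon,1-\epsilon]$, your scheme still needs $y_M\in(\epsilon,1-\epsilon)$ for some $M\geq\max_i N_i$. But the orbit $(y_M)$ may accumulate \emph{only} at the endpoints $0$ and $1$ (your own mass-growth argument shows it cannot accumulate in $(0,1)$), in which case $y_M$ eventually lies outside every fixed $[\epsilon,1-\epsilon]$ and Lemma~\ref{lem:intermediate} cannot be invoked. The paper's proof confronts precisely this scenario: assuming $b_n\to 1$, it uses a delicate claim~($\star$) and the intermediate value theorem to manufacture preimages $\boldsymbol{a}\notin\mathrm{Orb}(b)$ of points $\boldsymbol{b}_n$, then pulls these back through a single $P$-basic interval to obtain infinitely many distinct atoms of equal mass between two interior points. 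The combinatorics of first-entry times into $\mathrm{Orb}(b)$ is what guarantees distinctness; no appeal to surjectivity onto $[0,1]$ is made.
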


\begin{proof}
Assume toward contradiction that there is a point $\boldsymbol{b}\in
S$ such that $\Delta_\psi(\boldsymbol{b})>0$. For $n=0,1,2,\ldots,$
let $\boldsymbol{b}_n:=f^n(\boldsymbol{b})\in S$ and denote the
corresponding point in $[0,1]$ by $b_n$. We denote the orbit of $b$ by
$Orb(b)=\{b_0,b_1,b_2,\ldots\}$. By Lemma~\ref{lem:atomgrowth},
\begin{equation}\label{atommasses}
\Delta_\psi(\boldsymbol{b}_n) = \lambda^n \Delta_\psi(\boldsymbol{b}), \quad n\in\N
\end{equation}
and this grows to $\infty$ because $\lambda > 1$. If $Orb(b)$ has an
accumulation point in the open interval $(0,1)$, then the increment of
$\psi$ across a small neighborhood of this accumulation point is
$\infty$, contradicting Lemma~\ref{lem:properties}~\eqref{finite} and
we are done. Henceforth, we may assume that the orbit of $b$ only
accumulates at (one or both) endpoints of $[0,1]$. Consider first the
case when $Orb(b)$ accumulates at only one endpoint of $[0,1]$, and
assume without loss of generality that $\lim_{n\to\infty}b_n=1$.

Since $f$ is mixing, it must have a fixed point $w$ with $0<w<1$.
Since $b_n\to 1$, it follows that $b_n>w$ for all sufficiently large
$n$. Thus, after replacing $\boldsymbol{b}$ and $b$ with their
appropriate images, we may assume that $b_n>w$ for all $n\in\N$.
Equation~\eqref{atommasses} continues to hold, and it follows that $b$
is not a fixed point for $f$, so $b\neq 1$.

Now consider the following claim:
\begin{multline}\label{star}
\textnormal{For all } N\in\N \textnormal{ there exist } n>N
\textnormal{ and } \boldsymbol{a}\in S \\ \textnormal{ such that }
a\notin Orb(b),\ f(\boldsymbol{a})=\boldsymbol{b}_n,
\textnormal{ and } w<a<b_{n+1}.\tag{$\star$}
\end{multline}

The proof of claim~\eqref{star} proceeds in two cases. First, assume
that $b_N<b_{N+1}<b_{N+2}<\ldots$; i.e., starting from time $N$, the
orbit of $b$ moves monotonically to the right. Since $f$ is mixing,
the interval $[b_{N+1},1]$ cannot be invariant, so there must exist
$c>b_{N+1}$ with $f(c)<b_{N+1}$. Take $n=\max\{i:b_i<c\}$. Clearly
$n>N$. The relevant ordering of points is $b_{n-1}<b_n<c<b_{n+1}$.
Since $f(b_n)>b_n$ and $f(c)<b_n$, it follows by
Lemma~\ref{lem:intermediate} that there exists $\boldsymbol{a}$ with
$a$ between $b_n$ and $c$ such that
$f(\boldsymbol{a})=\boldsymbol{b}_n$. Clearly, $a\neq b_{n-1}$. It
follows that $a\notin Orb(b)$. Moreover, $w<a<b_{n+1}$.

The remaining case is that there exists $i\geq N$ such that
$b_{i+1}<b_i$; i.e., at some time later than $N$, the orbit moves to
the left. But our orbit is converging to the right-hand endpoint of
$[0,1]$, so it cannot go on moving to the left forever. Let
$n=\min\{j>i:b_{j+1}>b_j\}$. We have $n>N$, and the relevant ordering
of points is $b_{n-1}>b_n$ and $b_{n+1}>b_n$. Since $f(w)<b_n$ and
$f(b_n)>b_n$, it follows by Lemma~\ref{lem:intermediate} that there
exists $\boldsymbol{a}$ with $a$ between $w$ and $b_n$ such that
$f(\boldsymbol{a})=\boldsymbol{b}_n$. Again, we see that $a\neq
b_{n-1}$, so $a\notin Orb(b)$. Finally, $a<b_{n+1}$. This concludes
the proof of claim~\eqref{star}.

Now we apply claim~\eqref{star} recursively to find infinitely many
distinct atoms between $w$ and $b$, each with the same positive mass.
At stage 1, find $n_1$ and $\boldsymbol{a}_1$ with $a_1\notin Orb(b)$
such that $f(\boldsymbol{a}_1)=\boldsymbol{b}_{n_1}$ and
$w<a_1<b_{n_1+1}$. Now we apply Lemma~\eqref{lem:intermediate} to
$f^{n_1+1}$ to find $\boldsymbol{x}_1$ with $x_1$ between $w$ and $b$
such that $f^{n_1+1}(\boldsymbol{x}_1)=\boldsymbol{a}_1$. Then
$f^{n_1+2}(\boldsymbol{x}_1)=\boldsymbol{b}_{n_1}$, so by applying
Lemma~\ref{lem:atomgrowth} and equation~\eqref{atommasses} we have
$\Delta_\psi(\boldsymbol{x}_1) =
\lambda^{-(n_1+2)}\Delta_\psi(\boldsymbol{b}_{n_1}) =
\lambda^{-2}\Delta_\psi(\boldsymbol{b})$. The point $\boldsymbol{x}_1$
will serve as the first of infinitely many points between $w$ and $b$
at which $\psi$ has this particular increment. At stage $i$, set
$N=n_{i-1}$ and apply claim~\eqref{star} to find $n_i$ and
$\boldsymbol{a}_i$ with $n_i>n_{i-1}$. Again, we can find
$\boldsymbol{x}_i$ with $x_i$ between $w$ and $b$ and
$f^{n_i+1}(\boldsymbol{x}_i)=\boldsymbol{a}_i$, whence
$\Delta_\psi(\boldsymbol{x}_i)=\lambda^{-2}\Delta_\psi(\boldsymbol{b})$
as before. It remains to check that the points $\{x_i\}$ are distinct.
Observe that $f^{n_i+1}(x_i)=a_i$ does not belong to the invariant set
$Orb(b)$, whereas $f^{n_i+2}(x_i)=b_{n_i}\in Orb(b)$. By construction,
the numbers $\{n_i\}$ are all distinct. Thus, the points $\{x_i\}$ are
distinguished from one another by the time required to make first
entrance into $Orb(b)$.

Now we use our atoms to produce a contradiction. By
Lemma~\ref{lem:properties}~\eqref{finite}, the increment
$\psi(b)-\psi(w)$ is finite. Choose an integer $n$ large enough that
$n\lambda^{-2}\Delta_\psi(\boldsymbol{b})>\psi(b)-\psi(w)$. Consider
the points $\boldsymbol{x}_1, \boldsymbol{x}_2, \ldots,
\boldsymbol{x}_n$, and let $\delta$ be the minimum distance between
two adjacent points of the set $\{w,b\}\cup\{x_1,x_2,\ldots,x_n\}$.
For each $i=1,\ldots,n$ there exist $y_i, z_i \in Q$ with
$y_i<x_i<z_i$ and $\max\{z_i-x_i,x_i-y_i\}<\delta/2$. Then
$\psi(z_i)-\psi(y_i) \geq \lambda^{-2}\Delta_\psi(\boldsymbol{b})$. By
the monotonicity of $\psi$,
\begin{equation*}
\psi(b)-\psi(w)\geq\sum_{i=1}^n \psi(z_i)-\psi(y_i) >
n\lambda^{-2}\Delta_\psi(\boldsymbol{b})>\psi(b)-\psi(w).
\end{equation*}
This is a contradiction; in words, we cannot have infinitely many
atoms between $w$ and $b$ all having the same positive mass when the
total increment of $\psi$ between $w$ and $b$ is finite. This
completes the proof in the case that $Orb(b)$ accumulates at only one
endpoint of $[0,1]$.

Finally, let us say a few words about the case when $Orb(b)$
accumulates at both endpoints of $[0,1]$. In this case, $f(0)=1$ and
$f(1)=0$ by continuity. Again by continuity, for sufficiently large
$n$ the points $b_n$ belong alternately to a small neighborhood of $0$
and a small neighborhood of $1$. Thus, the subsequence $b_{2n}$
accumulates only on a single endpoint of $[0,1]$. The map $f^2$ is
again topologically mixing. It is straightforward, then, to modify the
above proof to deal with this case, by working along the subsequence
$b_{2n}$ and writing $f^2$ and $\lambda^2$ in place of $f$ and
$\lambda$.
\end{proof}

\section{The Rest of the Proof of Theorem
 ~\ref{thm:main}}\label{sec:finish}

Having resolved Problem~\ref{prob:jump}, we are ready to finish the
proof of Theorem~\ref{thm:main}.

\begin{proof}
It remains to show that condition~\eqref{criterion} is sufficient. We
have defined on the dense subset $Q\subset[0,1]$ a strictly monotone
map $\psi:Q\to[-\infty,\infty]$. In light of Lemma~\ref{lem:noatoms},
the formula $\psi(x)=\sup \psi(Q\cap[0,x)) = \inf \psi(Q\cap(x,1])$
gives a well-defined extension $\psi:[0,1]\to[-\infty,\infty]$. Strict
monotonicity of the extension follows from the strict monotonicity of
$\psi|_Q$ and the density of $Q$. We claim that the extended function
$\psi$ is continuous. It suffices to verify for each $x$ that
$\psi(x)=\lim_{y\to x^-}\psi(y)=\lim_{z\to x^+}\psi(z)$. By
monotonicity of $\psi$ and the density of $Q$ we may evaluate these
one-sided limits using points $y,z\in Q$, and by our definition of the
extended map $\psi$ the claim follows. Finally, from strict
monotonicity and continuity, it follows that
$\psi:[0,1]\to[-\infty,\infty]$ is a homeomorphism onto its image.

Define a map $g:\psi([0,1])\to\psi([0,1])$ by the composition
$g:=\psi\circ f\circ \psi^{-1}$. It is countably piecewise monotone
and Markov with respect to $\psi(P)$. If $y=\psi(x)$ and $y'=\psi(x')$
belong to a single $\psi(P)$-basic interval, then $x$ and $x'$ belong
to an interval of monotonicity of $f$. By
Lemma~\ref{lem:properties}~\eqref{expansion} and the density of $Q$ we
may conclude that $|g(y)-g(y')|=\lambda|y-y'|$. This shows that $g$
has constant slope $\lambda$.
\end{proof}

\section{Constant Slope on the Interval}\label{sec:example}

We present now a map $f:[0,1]\to[0,1]$, $f\in\CPMM$, with the following
linearizability properties. For any $\lambda\geq\lambda_{min}$, where
$\lambda_{min}$ is the positive real root of
$\lambda^3-2\lambda^2-\lambda-2$ (approximately 2.66), there is a map
$g:[0,1]\to[0,1]$ of constant slope $\lambda$ conjugate to $f$.
Moreover, the topological entropy of $f$ is equal to $\log
\lambda_{min}$. However, $f$ is not conjugate to any map of constant
slope on the extended real line or the extended half line. This
sharply illustrates the point that for countably piecewise monotone
maps, constant slope gives only an upper bound for topological
entropy.

Bobok and Soukenka~\cite{BS} have constructed a map with similar
linearizability properties, that is, with entropy $\log 9$ and with
conjugate maps of every constant slope $\lambda\geq9$. However, their
example exhibits transient Markov dynamics~\cite{BB}, whereas our map
$f$ exhibits strongly positive recurrent Markov dynamics (in the sense
of the Vere-Jones recurrence hierarchy for countable Markov chains,
see~\cite{Ru, VJ}). We regard this as evidence that the existence of
constant slope models for a given map is in some part independent of
the recurrence properties of the associated Markov dynamics (but see
the discussion at the beginning of section~\ref{sec:example2}).

To construct the map $f$, we subdivide the interval $[0,1]$ into
countably many subintervals $\{A_i\}_{i=0}^\infty$,
$\{B_i\}_{i=0}^\infty$, $\{C_i\}_{i=0}^\infty$, and $D$, ordered from
left to right as follows:
\begin{equation*}
D < C_0 < C_1 < C_2 < \ldots < x_{fixed} < \ldots < B_2 < A_2 < B_1 <
A_1 < B_0 < A_0.
\end{equation*}
We specify the lengths of the intervals $A_i$, $B_i$, $C_i$, $D$ to be 
equal (respectively) to the numbers $a_i$, $b_i$, $c_i$, $d$
given in equation~\eqref{fEig} below, taking $\lambda=\lambda_{min}$.
The partition $P$ consists of the endpoints of these intervals together
with their (unique) accumulation point, which we denote $x_{fixed}$ 
and set as a fixed point for $f$.
We prescribe for $f$ the following Markov dynamics:
\begin{gather*}\label{fMarDyn}
f(D)=[0,1], \quad f(C_0)=D, \quad f(C_i)=C_{i-1}, i\geq1,
\\ f(A_i)=\left(\bigcup_{j=i+1}^\infty A_j\right) \cup
\left(\bigcup_{j=i+1}^\infty B_j\right) \cup \left(\bigcup_{j=i}^\infty
C_j\right) \\ f(B_i)=\left(\bigcup_{j=i+2}^\infty A_j\right) \cup
\left(\bigcup_{j=i+2}^\infty B_j\right) \cup \left(\bigcup_{j=i}^\infty
C_j\right)
\end{gather*}
Moreover, we prescribe that our map will increase linearly on each of
the intervals $A_i$, $C_i$, and decrease linearly on each of the
intervals $B_i$, $D$. This completes the definition of $f$; we present its 
graph in Figure~\ref{fig:Michal}. As we will see, the lengths we chose
for the $P$-basic intervals comprise an eigenvector for the Markov transition matrix
with eigenvalue $\lambda_{min}$.  Thus, by construction, $f$ has constant slope
$\lambda=\lambda_{min}>2$.  This allows us to verify that $f$ is 
topologically mixing.  Indeed, let $U,V$ be a pair of arbitrary open intervals.
The iterated images $f^{n}(U)$ grow in size until some image contains 
an entire $P$-basic interval (any interval which does not contain an entire 
$P$-basic interval is folded by $f$ in at most one place, so that its image 
grows by a factor of at least $\lambda_{min}/2>1$, and such growth cannot 
continue indefinitely in a finite length state space).  Consulting the Markov 
transition diagram in Figure~\ref{fig:Michal2}, we see that the union of 
images of any given $P$-basic interval includes all $P$-basic intervals and
therefore intersects the open set $V$.

\begin{figure}[htb!]
\input{./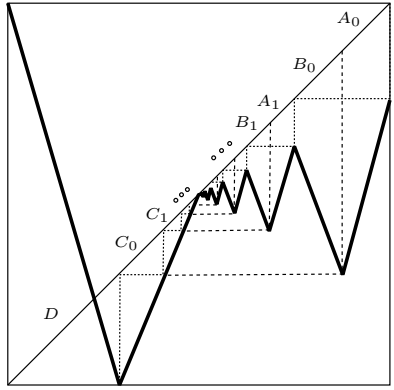tex_t}
\caption{A map conjugate to maps of any constant slope $\lambda \geq
  \lambda_{min}$.}\label{fig:Michal}
\end{figure}

Next we consider the nonnegative eigenvalues and eigenvectors
corresponding to our Markov partition. That is, we solve the system of
equations
\begin{equation}\label{fSys}
\left\{
\begin{array}{lcl}
\lambda a_i &=& \sum_{j=i+1}^\infty a_j + \sum_{j=i+1}^\infty b_j +
\sum_{j=i}^\infty c_j \\ \lambda b_i &=& \sum_{j=i+2}^\infty a_j +
\sum_{j=i+2}^\infty b_j + \sum_{j=i}^\infty c_j \\ \lambda c_i &=&
c_{i-1} \\ \lambda c_0 &=& d \\ \lambda d &=& \sum_{j=0}^\infty a_j +
\sum_{j=0}^\infty b_j + \sum_{j=0}^\infty c_j + d
\end{array}
\right.
\end{equation}
where $a_i$, $b_i$, $c_i$, and $d$ represent the entries corresponding
to the intervals $A_i$, $B_i$, $C_i$, and $D$, respectively, and
$\lambda$ represents the eigenvalue. We subtract the first equation of
\eqref{fSys} with index $i+1$ from the first equation with index $i$
to obtain
\begin{equation}\label{step1}
\lambda(a_i-a_{i+1})=a_{i+1}+b_{i+1}+c_i.
\end{equation}
Then we subtract the first equation of~\eqref{fSys} with index $i+2$
from the second equation with index $i+1$ to obtain
\begin{equation}\label{step2}
b_{i+1}=a_{i+2}+\lambda^{-1}c_{i+1}.
\end{equation}
Substituting~\eqref{step2} into~\eqref{step1}, applying the third
equation from~\eqref{fSys} to express all $c_i$'s in terms of $c_0$,
and rearranging terms, we obtain
\begin{equation}\label{step3}
0=a_{i+2}+(\lambda+1)a_{i+1}-\lambda a_i +
\lambda^{-i}(1+\lambda^{-2})c_0
\end{equation}
This equation defines a nonhomogeneous, constant-coefficient linear
recurrence relation on the terms $a_i$. The theory of linear
recurrence equations tells us that the general solution
to~\eqref{step3} is
\begin{equation}\label{step4}
a_i=\alpha x_+^i + \beta x_-^i +
\frac{(\lambda^2+1)c_0}{\lambda^3-\lambda^2-\lambda-1} \lambda^{-i},
\end{equation}
where $\alpha$ and $\beta$ are arbitrary constants and $x_+$, $x_-$
are the positive and negative solutions of the characteristic equation
$x^2+(\lambda+1)x-\lambda=0$ (they are real because $\lambda>0$).
Observe that the terms $c_i$ grow
exponentially with rate $\lambda^{-1}$, and from the first equation
of~\eqref{fSys} we have $\sum_{i=0}^\infty c_i < \lambda a_0 <
\infty$. It follows that $\lambda > 1$. Now of the three exponential
terms in~\eqref{step4}, the base with the greatest modulus is
$x_-<-1$. It follows that we must take $\beta=0$ to achieve
nonnegativity of the terms $a_i$. For $\lambda$ between $1$ and the
real root of $\lambda^3-\lambda^2-\lambda-1$ we have simultaneously
(miracle) that $\lambda^{-1}>x_+$
and that the coefficient of the $\lambda^{-i}$ term
is negative. Nonnegativity of the terms $a_i$ forces us to consider
only $\lambda$ greater than the real root of
$\lambda^3-\lambda^2-\lambda-1$, and henceforward we may assume that
$\lambda^3-\lambda^2-\lambda-1>0$ and (what is equivalent) that
$x_+>\lambda^{-1}$. Now that we are equipped with
equations~\eqref{step2},~\eqref{step4}, and the third equation
of~\eqref{fSys}, we are able to sum the geometric series in the fifth
equation of~\eqref{fSys}, which gives us that
$\alpha=\frac{x_+\lambda(\lambda^3-2\lambda^2-\lambda-2)}
{\lambda^3-\lambda^2-\lambda-1}c_0$. Again, we invoke nonnegativity of
the terms $a_i$ to conclude that $\lambda$ must be greater than or
equal to the real root of $\lambda^3-2\lambda^2-\lambda-2$, which is
approximately $2.66$. Combining all of our results so far, using the
equality $\frac{x_++1}{x_+-1}=\frac{\lambda}{x_+}$, and
choosing a scaling constant to clear all denominators, we have that
any solution to the system~\eqref{fSys} must be of the form
\begin{equation}\label{fEig}
\left\{
\begin{array}{l}
\lambda\in[\lambda_{min},\infty), \quad \lambda_{min} =
  \textnormal{the real solution of }\lambda^3-2\lambda^2-\lambda-2=0
  \\ x= \textnormal{the positive solution of }
  x^2+(\lambda+1)x-\lambda=0 \\ \alpha = x \lambda (\lambda^3 -
  2\lambda^2 - \lambda - 2) \\ a_i = \alpha x^i +
  (\lambda^2+1)\lambda^{-i} \\ b_i = \alpha x^{i+1} +
  (\lambda^2-1)\lambda^{-i} \\ c_i =
  (\lambda^3-\lambda^2-\lambda-1)\lambda^{-i} \\ d =
  (\lambda^3-\lambda^2-\lambda-1)\lambda
\end{array}
\right.
\end{equation}
Conversely, we can verify that equation~\eqref{fEig} does indeed
define a nonnegative solution to~\eqref{fSys}. This completes our
eigenvector analysis. In light of Theorem~\ref{thm:main} this
establishes our claims about the existence of constant slope maps
conjugate to $f$ for each $\lambda\geq\lambda_{min}$.

It is worth noting that the transition matrix $T$ cannot have
unsummable nonnegative eigenvectors for the simple reason that the
$P$-basic interval $D$ contains in its image all $P$-basic intervals,
so that the sum of the entries of any eigenvector must be
$\lambda\cdot d<\infty$.

Next, we wish to argue that the topological entropy of $f$ is equal 
to $\log\lambda_{min}$. We begin by recalling the necessary facts 
from the theory of transitive countable Markov chains. The \emph{Perron value}
$\lambda_M$ of a transitive, countable state Markov
chain is defined~\cite{VJ67} by $\lambda_M=\lim (p_{uu}^{(n)})^{1/n},$ 
where $p_{uu}^{(n)}$ denotes the number of length $n$ loops in the 
chain's transition graph which start and end at a fixed, arbitrary vertex 
$u$; the limit is independent of the choice of $u$. In contrast, the numbers 
$f_{uu}^{(n)}$ count only the length $n$ \emph{first-return} loops, 
which start and end at the vertex $u$ but do not visit $u$ at any 
intermediate time.  It may happen that $\Phi_u:=\limsup 
(f_{uu}^{(n)})^{1/n} < \lambda_M$ for some vertex $u$; then the same 
inequality holds for every vertex $u$ and the chain is called \emph{
strongly positive recurrent}~\cite[Definition 2.3 and Theorem 2.7]{Ru}.  
Moreover,~\cite[Proposition 2.4]{Ru} gives the following equivalence,
which allows us to detect strongly positive recurrence:
\begin{equation}\label{spr}
\Phi_u<\lambda_M \mbox{ if and only if }\sum_{n\geq1} f_{uu}^{(n)}\Phi_u^{-n}>1.
\end{equation}
Strongly positive recurrent chains are a special case of recurrent chains, 
for which the Perron value is known to be equal to the minimum of the set 
of eigenvalues for nonnegative eigenvectors~\cite[Theorem 2]{Pr}. The
connection to interval maps is given by~\cite[Proposition 7]{BB}, which 
says that the entropy of a topologically mixing countably piecewise 
monotone and Markov map is given by the logarithm of the Perron 
value of the corresponding Markov chain.

Consider now the countable state topological Markov chain
associated to our particular map $f$ with its given Markov partition. The
transition diagram of this chain is shown in Figure~\ref{fig:Michal2}.
Denoting its Perron value by $\lambda_M$ and applying the results
of the preceding paragraph, we have $h_{top}(f)=\log\lambda_M$.
To show that $\lambda_M=\lambda_{min}$, it suffices to count first return
paths and prove that the chain is strongly positive recurrent.
\begin{figure}[htb!]
\input{./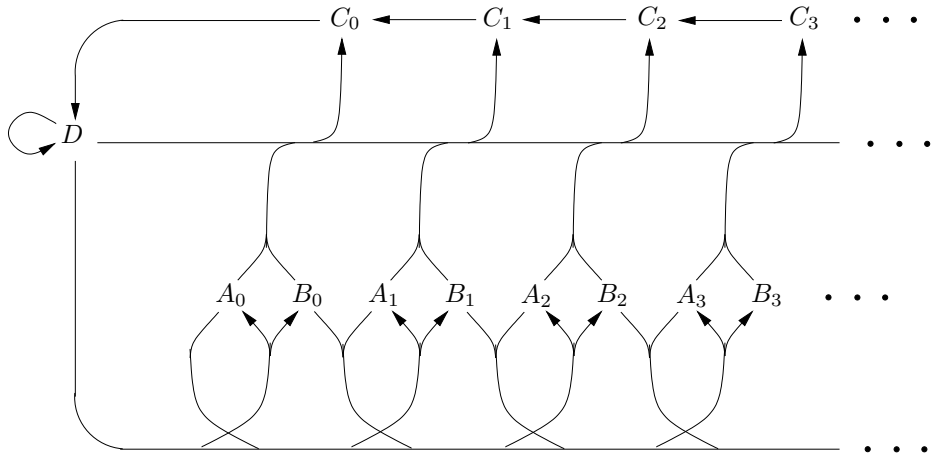tex_t}
\caption{The transition diagram.}\label{fig:Michal2}
\end{figure}

Let $f^{(n)}$ denote the number of first return paths of length $n$
from the vertex $D$ to itself. To compute these numbers, we organize
the collection of all first return paths from $D$ to itself as
follows. Using the convention $D=C_{-1}$, we find (see Figure~\ref{fig:Michal2}) that each first
return path may be written uniquely in the form $DxC_nC_{n-1}\cdots
C_0D$, where $x$ is a string (perhaps empty) consisting only of $A$'s
and $B$'s and $n\geq-1$. If the final symbol in the string $x$ is not $B_n$, then we
declare that $DxC_nC_{n-1}\cdots C_0D$ has three descendants, namely,
$DxC_{n+1}C_{n}\cdots C_0 D$, $DxA_{n+1}C_{n+1}C_{n}\cdots C_0 D$, and
$DxB_{n+1}C_{n+1}C_{n}\cdots C_0 D$. But if the final symbol in the
string $x$ is $B_n$, then we declare that $DxC_nC_{n-1}\cdots C_0D$ has
only one descendant, namely, $DxC_{n+1}C_{n}\cdots C_0 D$. This
relationship organizes the set of first return paths into a tree
(Figure~\ref{fig:tree}), in which each first return path traces a
unique ancestry back to the shortest first return path $DD$. Moreover,
we may organize this tree into levels, corresponding to the
lengths of the first return paths.
\begin{figure}[htb!]
\input{./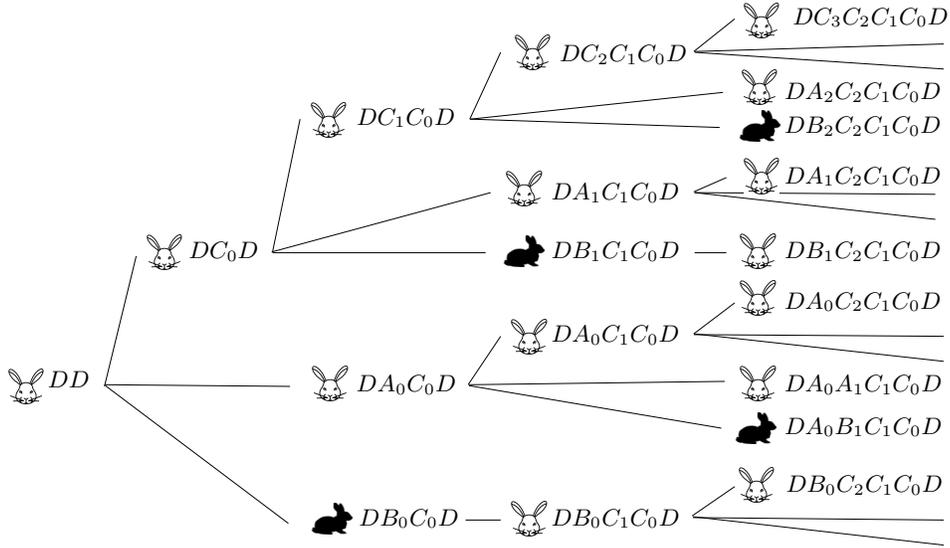tex_t}
\caption{The tree of first return paths.}\label{fig:tree}
\end{figure}

Looking again at Figure~\ref{fig:tree}, we see that the problem of
computing the growth rate of the numbers $f^{(n)}$ is the same as
computing the growth rate of a population of white and black rabbits,
reproducing according to the rules that each white rabbit gives birth
to a white rabbit on its first birthday and to twin white and black
rabbits on its second birthday, whereas a black rabbit gives birth to
a white rabbit on its first birthday and no additional rabbits. Thus,
the sub-population of white rabbits is growing according to the
recurrence relation $w^{(n+3)}=w^{(n+2)}+w^{(n+1)}+w^{(n)}$, while the
total population at generation $n$ is $f^{(n)}=w^{(n)}+w^{(n-2)}$.
This yields the closed form expressions
\begin{equation*}
\begin{gathered}
w^{(n)}=\alpha x_1^n+\beta x_2^n+\gamma x_3^n, \\
f^{(n)}=\alpha(1+x_1^{-2})x_1^n+\beta(1+x_2^{-2})x_2^n+\gamma(1+x_3^{-2})x_3^n,
\end{gathered}
\end{equation*}
where $x_1,x_2,x_3$ are the three roots of the characteristic
polynomial $x^3-x^2-x-1=0$ and the coefficients $\alpha$, $\beta$,
$\gamma$ can be determined by fitting the initial data $w^{(1)}=1$,
$w^{(2)}=1$, $w^{(3)}=2$. Of these three roots we have $x_1 \approx
1.84$ real and $x_2$, $x_3$ complex conjugates with modulus less than
$1$. Now the simple observation that $w^{(3)}>w^{(2)}$ gives us that
$\alpha\neq0$, and therefore we obtain the limit
$(f^{(n)})^{1/n} \to x_1$. However, the sum $\sum f^{(n)} x_1^{-n}$ diverges,
because the terms $f^{(n)}x_1^{-n}$ are converging to the nonzero constant
$\alpha(1+x_1^{-2})$. Comparing with equation~\eqref{spr}, we see that
our chain is strongly positive recurrent, which is what we wanted to show.

\section{Constant Slope on the Extended Real Line and Half Line}
\label{sec:extended}

We present now a map $f:[0,1]\to[0,1], f\in\CPMM$, with the following
linearizability properties. It is conjugate to maps of constant slope $\lambda$ 
on the extended real line (respectively, extended half line) for every 
$\lambda\geq2+\sqrt5$ (respectively, $\lambda>2+\sqrt5$).

First, define a map $F:\mathbb{R}\to\mathbb{R}$ as the piecewise
affine ``connect-the-dots'' map with ``dots'' at $(k,k-1)$,
$(k+b,k+b+1)$, $k\in\Z$, where $b=(\sqrt5-1)/2$; it is piecewise
monotone and Markov with respect to the set $\{k,k+b:k\in\Z\}$, and it
has constant slope $2+\sqrt5$. Moreover, fix a
homeomorphism $h:(0,1)\to\mathbb{R}$; if we wish to be concrete, we
may take $h(x)=\ln(x/(1-x))$. Let $f:[0,1]\to[0,1]$ be the map
$h^{-1}\circ F\circ h$ with additional fixed points at $0$, $1$. Then
$f$ is piecewise monotone and Markov with respect to the set
$P=\{0,1\}\cup\{h^{-1}(k),h^{-1}(k+b):k\in\Z\}$.  Figure~\ref{fig:F} shows the
graphs of $F$ and $f$ together with their Markov partitions.

\begin{figure}[hbt!]
\centering
\includegraphics[scale=.285]{./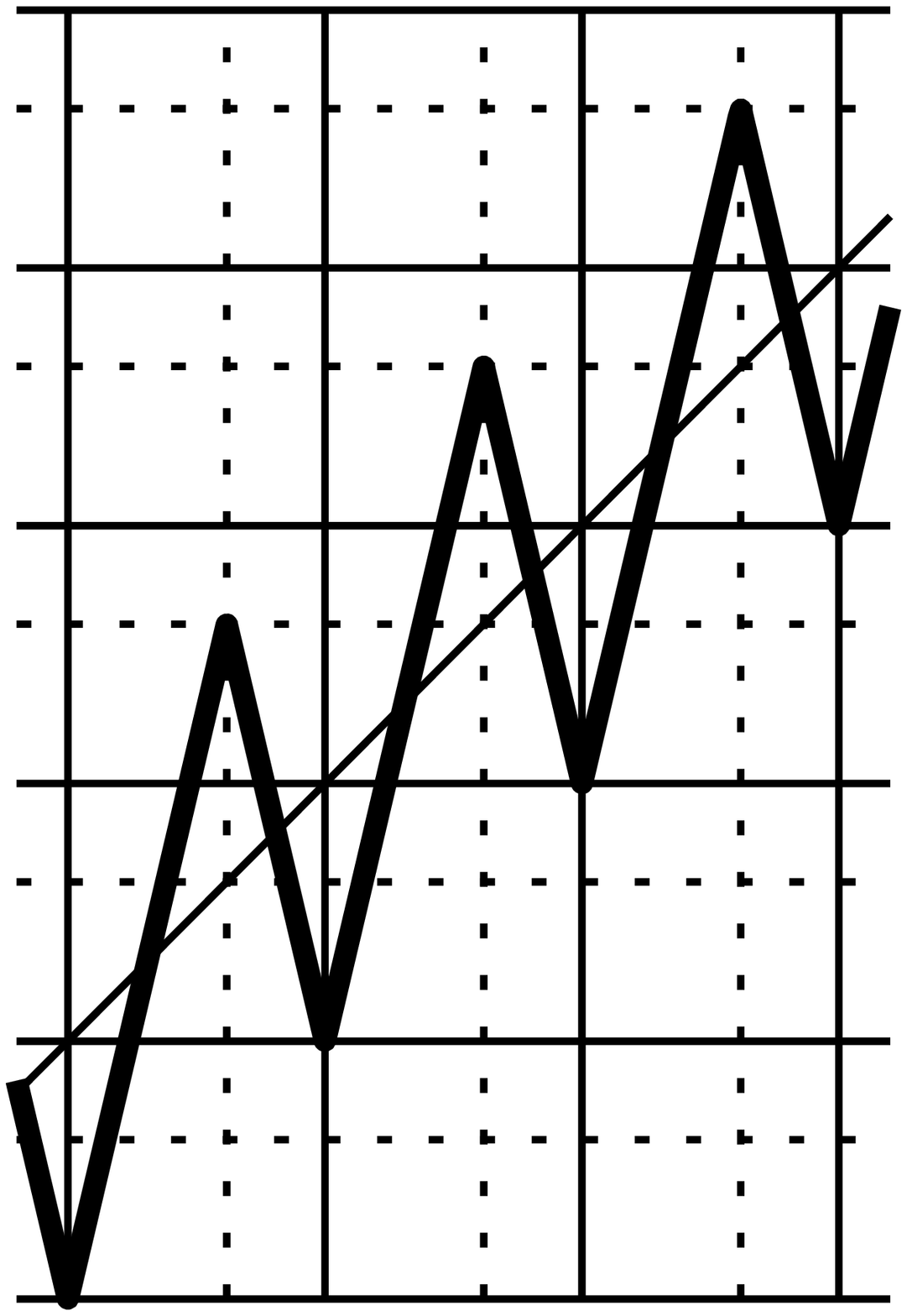}
\hspace{2cm}
\includegraphics[scale=.285]{./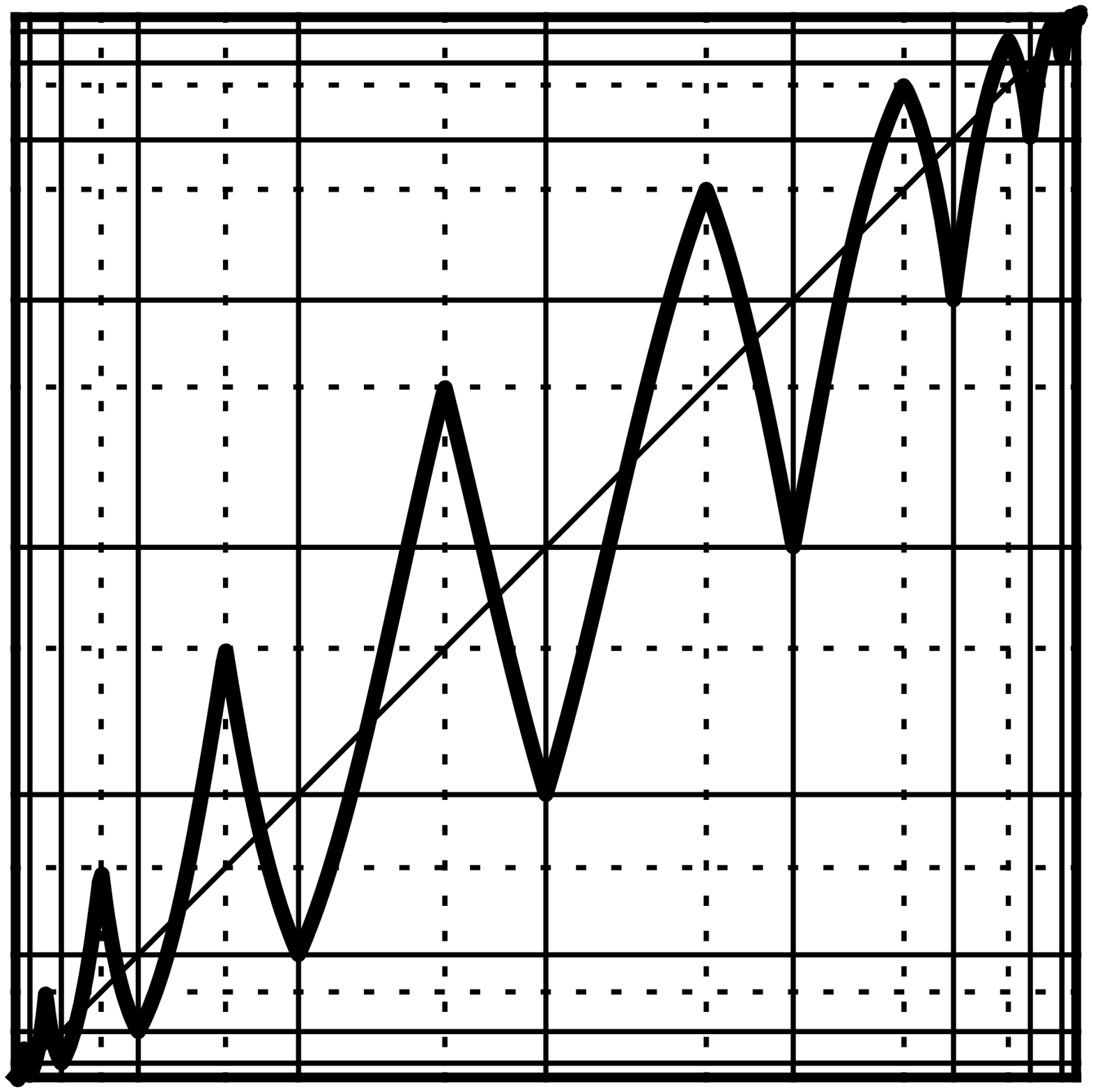}
\caption{The constant slope map $F:\R\to\R$ and the conjugate 
map $f:[0,1]\to[0,1]$ with fixed points at $0,1$.}\label{fig:F}
\end{figure}

We enumerate the $P$-basic intervals as follows:
\begin{equation}\label{enum}
I_{2k}=[h^{-1}(k),h^{-1}(k+b)], \quad
I_{2k+1}=[h^{-1}(k+b),h^{-1}(k+1)], \quad k\in\Z.
\end{equation}
The Markov transitions are given by
\begin{equation}\label{MarTra}
f(I_{2k})=\bigcup\limits_{i=2k-2}^{2k+2}I_i, \quad
f(I_{2k+1})=\bigcup\limits_{i=2k}^{2k+2}I_i, \quad
k\in\Z.
\end{equation}

We must verify that $f$ is mixing.  Let $U,V$ be any pair of nonempty
open intervals.  We may assume that the points $0,1$ do not belong to 
$U$ or $V$.  Passing through the conjugacy, we may work instead 
with the map $F$ and the intervals $h(U), h(V)$.  Each iterated image of 
$h(U)$ that contains at most one folding point of $F$ is expanded in length by 
a factor of at least $\frac12(2+\sqrt{5})>1$.  Therefore some iterated image 
of $h(U)$ contains an entire $h(P)$-basic interval.  The Markov 
transitions are such that the iterated images of an arbitrary basic interval
eventually include any other given basic interval.  This establishes the mixing
property. 

Let $T$ be the 0-1 transition matrix for the map $f$ and the partition set $P$.
In light of Theorem~\ref{thm:main}, we wish to
find all nonnegative solutions $v\in\R^{\B(P)}, \lambda>1$ to the equation
$Tv=\lambda v$. Comparing equation~\eqref{MarTra} with the definition of $T$,
we are looking for all nonnegative solutions to the infinite system of
equations
\begin{equation}\label{InfSys}
\def\arraystretch{1.5}
\left\{
\begin{array}{lcl}
\lambda\, v_{I_{2k}}&=&\sum_{i=2k-2}^{2k+2}v_{I_i} \\
\lambda\, v_{I_{2k+1}}&=&\sum_{i=2k}^{2k+2}v_{I_i} \\
\end{array}
\right.
\qquad \qquad k\in\Z
\end{equation}
Adding and subtracting equations, we obtain
\begin{equation*}
\left\{
\def\arraystretch{1.5}
\begin{array}{l}
\lambda(v_{I_{2k+1}}+v_{I_{2k-1}}-v_{I_{2k}})=v_{I_{2k}}\\
\lambda v_{I_{2k+1}}=v_{I_{2k}}+v_{I_{2k+1}}+v_{I_{2k+2}}
\end{array}
\right.
\qquad \qquad k\in\Z.
\end{equation*}
Solving for later variables in terms of earlier ones, we obtain
\begin{equation}\label{InfSysLinRec}
\def\arraystretch{1.5}
\left[
\begin{array}{c}
v_{I_{2k+1}} \\
v_{I_{2k+2}}
\end{array}
\right]
=
\underbrace{
\left[
\begin{array}{cc}
-1 & 1+\frac1\lambda \\
-\lambda+1 & \lambda-1-\frac1\lambda
\end{array}
\right]
}
_{M_\lambda}
\left[
\begin{array}{c}
v_{I_{2k-1}} \\
v_{I_{2k}}
\end{array}
\right]
\qquad \qquad k\in\Z.
\end{equation}
Equation~\eqref{InfSysLinRec} should be regarded as a linear
recurrence relation on $v$.  Notice (miracle) that $\det M_\lambda=1$ 
is independent of $\lambda$.  Using the invertibility of $M_\lambda$, 
we may conclude inductively that
\begin{equation*}
\def\arraystretch{1.5} \left[\begin{array}{c} v_{I_{2k+1}}
    \\ v_{I_{2k+2}} \end{array}\right] = M_\lambda^k \left[\begin{array}{c}
    v_{I_1} \\ v_{I_2} \end{array} \right], \quad k\in\Z.
\end{equation*}
We may regard the matrix $M_\lambda$ as defining a dynamical
system on $\R^2$. Then the entries of $v$ are the orbit of the initial
point $(v_{I_1},v_{I_2})$. To obtain nonnegative entries for $v$, we
must choose the initial point so that the whole orbit (both forward 
and backward) remains in the first quadrant.  We can solve this problem 
using the elementary theory of linear transformations on 
$\mathbb{R}^2$.  There are 3 cases we must consider -- they are 
pictured in Figure~\ref{fig:shear} and explained in the following three
paragraphs.

\begin{figure}[htb!]
\input{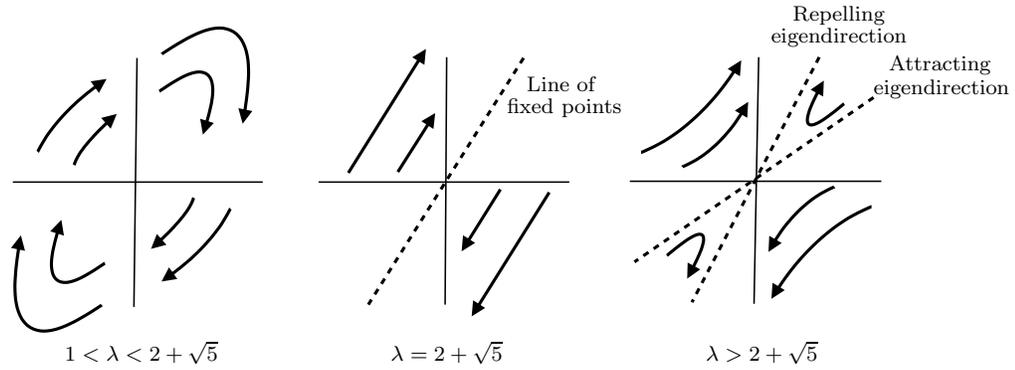}
\vspace{-0.5cm}
\caption{The action of $M_\lambda$ on $\mathbb{R}^2$}\label{fig:shear}.
\end{figure}

If $1<\lambda<2+\sqrt5$, then the eigenvalues of $M_\lambda$ are 
complex conjugates and $M_\lambda$ ``rotates'' $\mathbb{R}^2$ about 
the origin.  In this case, no orbit stays in the first quadrant.  The diligent 
reader may verify the implications $v_{I_1}\geq v_{I_2}\geq0
\implies v_{I_4}<0$ and $v_{I_2}\geq v_{I_1}\geq0 \implies v_{I_{-1}}<0$.

If $\lambda=2+\sqrt5$, then $M_\lambda$ has unique real eigenvalue 
$1$ with algebraic multiplicity 2 and geometric multiplicity 1.  Thus, 
$M_\lambda$ acts as a shear on $\mathbb{R}^2$ parallel to a line of 
fixed points (corresponding to the unique eigenvector of $M_\lambda$).
The only way to obtain a whole orbit in the first quadrant is to choose the
initial point $(v_{I_1},v_{I_2})$ from the line of fixed points.  This yields
(up to a scalar multiple) the unique nonnegative solution
\begin{equation}\label{InfSysSol}
v_{I_{2k}}=2, \quad v_{I_{2k+1}}=\sqrt5-1, \quad k\in\Z
\end{equation} 
Applying Theorem~\ref{thm:main}, we recover (up to 
scaling, and with fixed points at $\pm\infty$) the constant slope map $F$ 
which started our whole discussion.

If $\lambda>2+\sqrt5$, then $M_\lambda$ has distinct positive, real 
eigenvalues whose product is $1$.  There are distinct eigenvectors in 
the first quadrant and the origin is a saddle fixed point.  Any initial point 
$(v_{I_1},v_{I_2})$ chosen between these eigendirections in the first 
quadrant yields an unsummable, nonnegative $v$.  We can achieve 
unsummability of $v$ on one side or on both sides, according as we 
choose the initial point to lie on one of these eigendirections or strictly 
between them.  Accordingly, Theorem~\ref{thm:main} yields a constant 
slope map either on an extended half line or on the extended real line,
(see Remark~\ref{rem:gPhaseSpace}).

\section{No Constant Slope}\label{sec:example2}

We construct now a
topologically mixing map $f\in\CPMM$ whose transition matrix does not
admit any nonnegative eigenvectors, summable or otherwise. That means
that $f$ is not conjugate to any map of any constant slope, even
allowing for maps on the extended real line. In terms of the
Vere-Jones recurrence hierarchy, our map $f$ has transient Markov
dynamics. Indeed it must, since for recurrent Markov chains there
always exists a nonnegative eigenvector (see~\cite{VJ}). For transient
Markov chains, Pruitt~\cite{Pr} offers a nice criterion for the
existence or nonexistence of a nonnegative eigenvector; our example
was inspired by Pruitt's paper. Other examples of this type are considered in
the forthcoming article~\cite{BB}.

We construct $f$ as follows. Fix a subset $N\subset\N$, $1\notin N$,
such that $\frac{\pi(n)}{n}\to0$, and
$\sum_{n=0}^{\infty}3^{-\pi(n)}<3$, where
$\pi(n)=\#N\cap\{1,2,\ldots,n\}$, $\pi(0)=0$. If we wish to be
explicit, we may take $N$ such that $\{\pi(n)\}_{n=0}^\infty$ is the
sequence 0, 0, 1, 2, 2, 3, 3, 3, 4, 4, 4, 4, \ldots. Subdivide $[0,1]$
into adjacent intervals
$I_n=\left[\frac{1}{2^{n+1}},\frac{1}{2^n}\right]$, $n\geq0$. Let
$f:[0,1]\to[0,1]$ be the continuous, piecewise affine map with the
following properties. For $n\in N$, $f$ maps $I_n$ onto $I_{n-1}$ once
with slope $+2$. For $n\in\N\setminus N$ $f$ maps $I_n$ onto $I_{n-1}$
three times with alternating slopes $\pm6$. Finally, $f$ maps $I_0$
onto the whole space $[0,1]$ with slope $-2$. The idea is illustrated
in Figure~\ref{fig:noconstslope}; the choice of $N$ controls which
windows contain only one branch of monotonicity.

\begin{figure}[ht]
\begin{center}
\input{./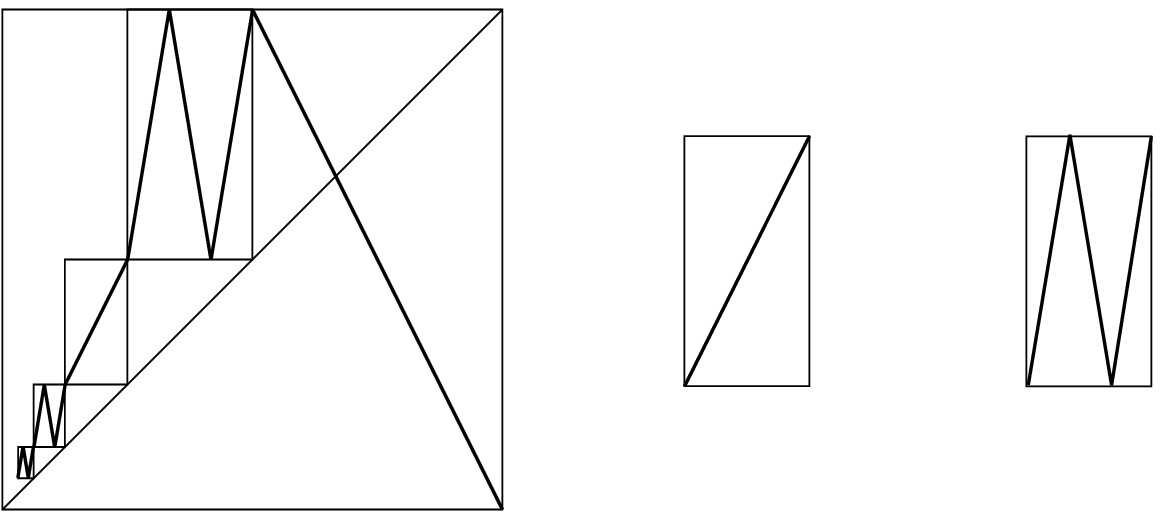tex_t}
\caption{A map not conjugate to any constant slope
  map.}
\end{center}\label{fig:noconstslope}
\end{figure}

If we further subdivide each of the intervals $I_n$, $n\geq1$, into
three subintervals
$I_n^k=[\frac{1}{2^{n+1}}+\frac{k}{3\cdot 2^{n+1}},
\frac{1}{2^{n+1}}+\frac{k+1}{3\cdot 2^{n+1}}]$,
$k\in\{0,1,2\}$, then our map $f$ is countably piecewise monotone and
Markov with respect to this refined partition.

We wish to use Theorem~\ref{thm:main}, and therefore we must verify 
that $f$ is topologically mixing.  Let $U$ be an arbitrary open interval.
If an interval is mapped forward monotonically by $f$, then its image is an 
interval of at least twice the length.  Therefore, there is some minimal $n_0$ 
such that $f^{n_0}(U)$ contains a folding point of $f$.  Then $f^{n_0+1}(U)$
contains a point of the form $2^{-n_1}$.  Thus, $f^{n_0+n_1+2}(U)$ contains a
neighborhood of zero, and hence a whole interval $I_{n_2}$.  Then
$f^{n_0+n_1+n_2+3}(U)=[0,1]$.  This shows that $f$ is topologically
mixing (and even locally eventually onto).

We investigate now the
existence of nonnegative eigenvectors for the corresponding transition
matrix. Suppose that there is an eigenvector with some eigenvalue
$\lambda>0$. Let $v_n$ denote the sum of the entries corresponding to
$I_n^1$, $I_n^2$, and $I_n^3$, and let $v_0$ denote the entry
corresponding to the undivided interval $I_0$. Then the eigenvector
condition implies that
\begin{equation}\label{noeigenvector}
\lambda v_n = \begin{cases} v_{n-1}, & \textnormal{if }n\in N
  \\ 3v_{n-1}, & \textnormal{if }n\notin N \end{cases}, \qquad \lambda
v_0 = \sum_{n=0}^\infty v_n.
\end{equation}
By rescaling our vector if necessary, we may suppose that $v_0=1$. It
follows inductively that $v_n=\frac{3^{n-\pi(n)}}{\lambda^n}$. If
$\lambda\geq3$, then $\sum_{n=0}^\infty v_n\leq\sum_{n=0}^\infty 3^{-\pi(n)} < 3$ by the
choice of $N$, which contradicts the last equation
of~\eqref{noeigenvector}. If $\lambda<3$, then $v_n \to \infty$ by the
choice of $N$, so that $\sum_n v_n$ diverges, which again
contradicts~\eqref{noeigenvector}. It follows that our transition
matrix has no nonnegative eigenvectors. In light of
Theorem~\ref{thm:main}, this means that there does not exist any
conjugate map of any constant slope, even allowing for maps on the
extended real line.

\section{Piecewise Continuous Case}\label{sec:pcws}

We turn our attention now to piecewise continuous maps. We finish the
proof of Theorem~\ref{thm:pcws}, showing by example the insufficiency
of condition~\eqref{criterion}. We begin by defining two maps $f$, $g$
on the extended half-line $[0,\infty]$ with the same Markov structure.
The $P$-basic intervals are the intervals
$A_i=\left(2^i+2i-3,2^{i+1}+2i-2\right)$,
$B_i=\left(2^{i+1}+2i-2,2^{i+1}+2i-1\right)$, $i=0,1,2,\ldots$; thus
$P$ is the set of endpoints of these intervals together with the point
at infinity. Notice that these intervals have lengths $|A_i|=2^i+1$,
$|B_i|=1$ and are arranged from left to right in the order $A_0 < B_0
< A_1 < B_1 < A_2 < B_2 < \ldots$. Both maps $f$, $g$ will exhibit
Markov transitions as indicated in Figure~\ref{fig:transition}, where
an arrow $I\to J$ indicates that the image of interval $I$ includes
interval $J$.
\begin{figure}[htb!]
\begin{center}
\input{./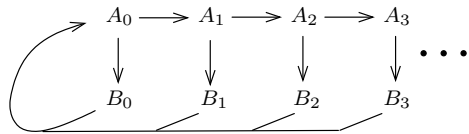tex_t}
\end{center}
\caption{Transition diagram for the maps $f$ and $g$.}
\label{fig:transition}
\end{figure}
For each $i$, both $g|_{A_i}$ and $g|_{B_i}$ are affine with slope
$2$; this completes the definition of $g$. Moreover, for all $i$,
$f|_{B_i}$ is affine with slope $2$. However, the definition of
$f|_{A_i}$ is different. For each $i$, $f$ carries
$\left(2^i+2i-3,2^{i+1}+2i-3\right)$ (all but the right-most unit of
$A_i$) affinely onto $B_i$ with slope $2^{-i}$, and carries
$\left(2^{i+1}+2i-3,2^{i+1}+2i-2\right)$ (the right-most unit of
$A_i$) affinely onto $A_{i+1}$ with slope $2^{i+1}+1$. This completes
the definition of $f$. The graphs of both maps are shown below.

\begin{figure}[htb!]
\input{./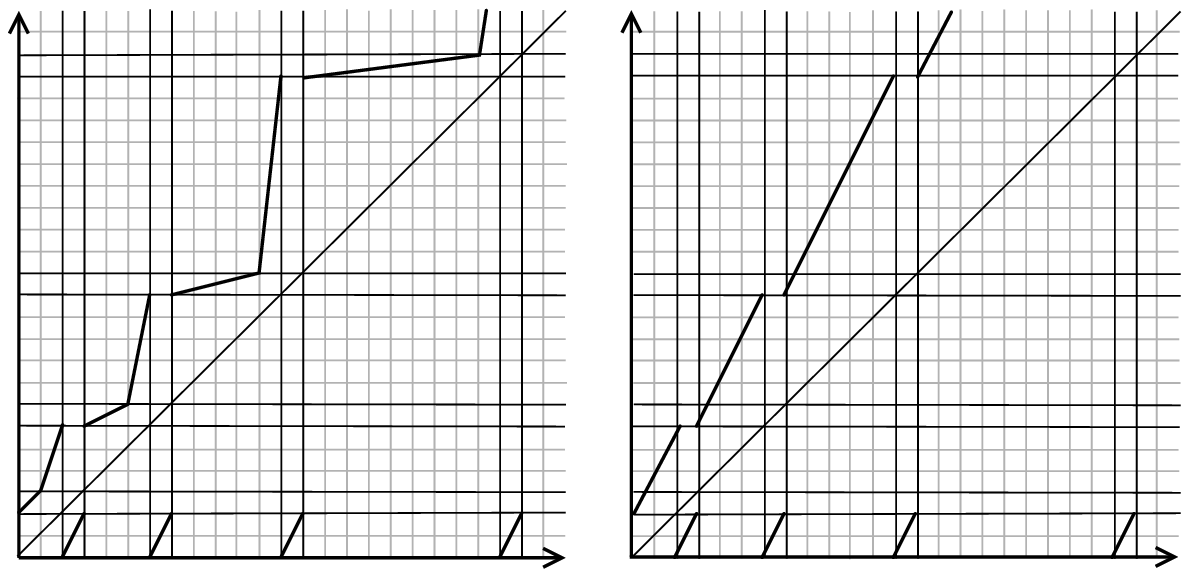tex_t}
\caption{The graphs of the maps $f$ and $g$.}\label{fig:graphs}
\end{figure}

We claim that the map $f$ is topologically mixing. 
Indeed, let $U, V\subset[0,\infty]$ be any
open intervals. If for each $i$, $f^i(U)$ is contained in a single
$P$-basic interval, then either there are infinitely many indices
$i_j$ for which $f^{i_j}(U)\subset A_0$ or there are only finitely
many (perhaps zero). The first case is impossible because the length
$|f^{i_{j+1}}(U)|$ must be greater than the length $|f^{i_j}(U)|$ by a
factor of at least $2$ (consider all possible loops from $A_0$ to
itself in Figure~\ref{fig:transition} and multiply slopes along the
loop). The second case is impossible because then some image of $U$
must be contained in a set of the form $\bigcap_{k=0}^\infty f^{-k}
(A_{n+k})$ for some $n$ (look again at Figure~\ref{fig:transition}). But
the length of the finite intersection $\bigcap_{k=0}^K f^{-k}(A_{n+k})$
is equal to the length of $A_n$ times the proportion of $A_n$ mapped
onto $A_{n+1}$, times the proportion of $A_{n+1}$ mapped onto $A_{n+2}$,
and so on, up to the proportion of $A_{n+K-1}$ mapped onto $A_{n+K}$,
that is,
\begin{multline}\label{lengthcalculation}
\left|\bigcap_{k=0}^K f^{-k} A_{n+k}\right|=
|A_n| \times \frac{1}{|A_n|} \times \frac{1}{|A_{n+1}|} \times \cdots \times
 \frac{1}{|A_{n+K-1}|}= \\
= \frac{2^n+1}{\left(2^n+1\right)\left(2^{n+1}+1\right)\cdots
\left(2^{n+K-1}+1\right)}
\end{multline}
which decreases to zero as $K\to\infty$. We conclude that there
exists $i$ such that $f^i(U)$ is not contained in a single $P$-basic
interval. Looking now at the graph of $f$, we can see that either
$f^{i+1}(U)$ or else $f^{i+2}(U)$ contains some interval
$(0,\eps)$, and in particular, contains some interval
$(0,2^{-i'})$. Observe that $f^{2i'}(0,2^{-i'})=A_0$. Let $i''$ be
such that $V\cap (A_{i''}\cup B_{i''})\neq\emptyset$. For all $n\geq
i''+3$ we have $f^n(A_0)\supset (A_{i''}\cup B_{i''})$ (look again for
paths in Figure~\ref{fig:transition}). Therefore, for all $n\geq
i+2+2i'+i''+3$ we have $f^n(U)\cap V \neq \emptyset$. This
 concludes the proof that $f$ is topologically mixing.

Next we consider eigenvectors associated with the Markov structure of
the map $f$. We single out the eigenvalue $\lambda=2$, and we denote
by $a_i, b_i$, $i=0,1,2,\ldots$, the entries of an eigenvector
corresponding to the intervals $A_i, B_i$, $i=0,1,2,\ldots$,
respectively. In particular, we must find nonnegative solutions to the
infinite system of equations
\begin{equation*}
\left\{
\begin{array}{l}
2a_i=b_i+a_{i+1} \\
2b_i=a_0
\end{array}
\right. ,\qquad i\in\mathbb{N}.
\end{equation*}
Since eigenvectors are defined only up to a scaling constant, we are
free to fix $b_0=1$. It follows that $a_0=2$ and that $b_i=1$ for all
$i$. Then the entries $a_i$ can be computed inductively as
$a_i=2^i+1$. Up to scaling, this is the only eigenvector for the
eigenvalue $\lambda=2$.

Despite the existence of this eigenvector, $f$ is not conjugate to any
map of constant slope $2$. Indeed, let $\varphi$ be a homeomorphism of
$[0,\infty]$ onto a closed (sub)interval of the extended real line;
without loss of generality we may assume that $\varphi$ is
orientation-preserving. Suppose that the conjugate map $\varphi\circ
f\circ\varphi^{-1}$ has constant slope $2$. By Lemma~\ref{lem:Bobok} the lengths of the
$\varphi(P)$-basic intervals must be given by an eigenvector with
eigenvalue $2$. Therefore, after rescaling and translating $\varphi$ if
necessary, the map $\varphi\circ f \circ\varphi^{-1}$ is equal to the map
$g$ which we have already defined, so that $\varphi(A_i)=A_i$ and
$\varphi(B_i)=B_i$ for all $i$. In other words, $\varphi$ fixes the entire
set $P$. Let $x_k$ denote the left-hand endpoint of the interval $A_0
\cap f^{-1}(A_1) \cap \ldots \cap f^{-k}(A_k)$, and let $y_k$ denote
the left-hand endpoint of the interval $A_0 \cap g^{-1}(A_1) \cap
\ldots \cap g^{-k}(A_k)$. Since $\varphi$ conjugates $f$ with $g$, we
must have $\varphi(x_k)=y_k$ for all $k$. By the same reasoning as
we used to derive equation~\eqref{lengthcalculation}, we have
\begin{equation*}
\begin{array}{l}
x_k = 2 - 2 \times \frac{1}{2} \times \frac{1}{3} \times \cdots \times
\frac{1}{2^{k-1}+1}, \\ 
y_k = 2 - 2 \times \frac{3}{4} \times \frac{5}{6}
\times \cdots \times \frac{2^k+1}{2^k+2}.
\end{array}
\end{equation*}
Inductively, $y_k=2 - \frac{2^k+1}{2^k}$.  We have $x_k\to2$ and 
$y_k\to1$. Since $\varphi(2)=2$, this
contradicts the continuity of $\varphi$. We conclude that $f$ is not
conjugate to any map of constant slope $2$.

\begin{remark}
We can also interpret this example from the point 
of view of wandering intervals.  The map $g$ has a wandering interval 
$A_0\cap g^{-1}(A_1)\cap g^{-2}(A_2) \cap \cdots$.  Thus, even though 
it has constant slope $2$ and the right Markov structure, it cannot be 
conjugate to the topologically mixing map $f$.
\end{remark}

\section{The Mixing Hypothesis}\label{sec:mix}

We turn our attention now to maps which are topologically transitive
but not topologically mixing. We finish the proof of
Theorem~\ref{thm:notmixing}, showing the insufficiency of
condition~\eqref{criterion}. We construct a map $\tilde{f}$ in $\CPMM$ which
is topologically transitive but not mixing. We give a nonnegative
eigenvector $v$ for the transition matrix $T$, but prove that $\tilde{f}$ is
not conjugate to any map on any subinterval
$[a,b]\subset[-\infty,\infty]$ with constant slope equal to the eigenvalue of
$v$.

Let $f$ and $P$ be as defined in Section~\ref{sec:extended}.  We define
$\tilde{f}:[-1,1]\to[-1,1]$ by the formula
\begin{equation*}
\tilde{f}(x)=\begin{cases}-f(x),&\textnormal{ if }x\in[0,1]\\-x&\textnormal{
  if }x\in[-1,0]\end{cases}
\end{equation*}
This map $\tilde{f}$ is piecewise monotone and Markov with respect to the set
$\tilde{P}=P\cup -P$. Figure~\ref{fig:SquareRoot} shows the graph of $\tilde{f}$
(in bold). Superimposed is the graph of
the second iterate $\tilde{f}^2$. By construction, $\tilde{f}^2|_{[0,1]}$ and
$\tilde{f}^2|_{[-1,0]}$ are both isomorphic copies of the map $f$.  In this sense,
$\tilde{f}$ is a kind of dynamical square root of $f$.

\begin{figure}[hbt!]
\centering
\includegraphics[scale=.3]{./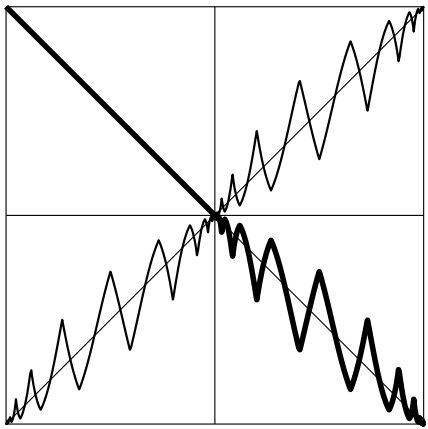}
\caption{The map $\tilde{f}$ (shown in bold) is transitive but not
  mixing.  It's second iterate (superimposed) consists of two
  copies of the map $f$ from Section~\ref{sec:extended}
  (compare with Figure~\ref{fig:F}).}
\label{fig:SquareRoot}
\end{figure}

We claim that $\tilde{f}$ is topologically transitive, but not topologically
mixing. To see the transitivity, take arbitrary nonempty
open subsets $U$, $V$ of $[-1,1]$. After shrinking these sets, we may assume
that $U,V$ are open intervals not containing $0$. Consider first the case when $U,V\subset[0,1]$. By
the transitivity of $f$ there exists $n$ such that $U\cap
f^{-n}(V)\neq\emptyset$, but then $U\cap \tilde{f}^{-2n}(V)\neq\emptyset$. The
case when $U,V\subset[-1,0]$ is similar. Now consider the case when
$U\subset[0,1]$ and $V\subset[-1,0]$. Using the reflected set $-V$ and
the transitivity of $f$, find $n$ such that $U\cap
f^{-n}(-V)\neq\emptyset$. Then $U\cap \tilde{f}^{2n-1}(V)\neq\emptyset$. The
case when $U\subset[-1,0]$ and $V\subset[0,1]$ is similar. This shows
topological transitivity of $\tilde{f}$. To see that $\tilde{f}$ is not topologically
mixing, notice that the set $\{n\in\N:(0,1)\cap
\tilde{f}^{-n}(0,1)\neq\emptyset\}$ consists of only the even natural numbers.

Let $\tilde{T}$ be the 0-1 transition matrix for the map $\tilde{f}$ with respect
to the Markov partition by $\tilde{P}$.  We label the $\tilde{P}$-basic intervals
$I_k, J_k, k\in\Z$, where the intervals $I_k$ are given by equation~\eqref{enum} and
the intervals $J_k$ are their reflections, $J_k=-I_k$. Fix $\lambda=2+\sqrt{5}$.
We find all nonnegative solutions $v\in\R^{\B(\tilde{P})}$ to the equation
$\tilde{T}v=\sqrt\lambda v$.  In light of the Markov transitions, this is the infinite
system of equations
\begin{equation}\label{InfSys2}
\def\arraystretch{1.5}
\left\{
\begin{array}{lcl}
\sqrt\lambda\, v_{I_{2k}}&=&\sum_{i=2k-2}^{2k+2}v_{J_i} \\
\sqrt\lambda\, v_{I_{2k+1}}&=&\sum_{i=2k}^{2k+2}v_{J_i} \\
\sqrt\lambda\, v_{J_k}&=&v_{I_k}
\end{array}
\right.
\qquad \qquad k\in\Z
\end{equation}

If we substitute the
last line in equation~\eqref{InfSys2} into the first two lines, we recover
equation~\eqref{InfSys}, which for $\lambda=2+\sqrt{5}$ has (up to scalar
multiples) the unique nonnegative solution~\eqref{InfSysSol}.
Therefore~\eqref{InfSys2} has (up to scalar multiples) the unique 
nonnegative solution

\begin{equation}\label{InfSys2Sol}
v_{I_{2k}}=2, \quad
v_{I_{2k+1}}=\sqrt5-1, \quad
v_{J_{2k}}=\frac{2}{\sqrt\lambda}, \quad
v_{J_{2k+1}}=\frac{\sqrt5-1}{\sqrt\lambda}, \qquad
k\in\Z.
\end{equation}

Now we show that despite the existence of this eigenvector $v$, there
does not exist any conjugacy $\psi$ of the map $\tilde{f}$ to a map of
constant slope $\sqrt\lambda$. Assume the contrary. Then by the
uniqueness of $v$ and by Lemma~\ref{lem:Bobok}, we have
\begin{multline*}
|\psi(I_{2k})|=2c, \quad
|\psi(I_{2k+1})| = (\sqrt5-1)c, \\
|\psi(J_{2k})|=\frac{2c}{\sqrt\lambda}, \quad
|\psi(J_{2k+1})|=\frac{(\sqrt5-1)c}{\sqrt\lambda}, \qquad k\in\Z,
\end{multline*}
for some positive real scalar $c$. But the $P$-basic intervals
accumulate at the center of $[-1,1]$ so that a small open interval
$(-\eps,\eps)$ contains infinitely many $P$-basic intervals. Thus,
$\psi(-\eps,\eps)$ has infinite length. On the other hand, a
nondecreasing homeomorphism $\psi:[-1,1]\to[-\infty,\infty]$ must take
finite values at every interior point of the interval $[-1,1]$. This
is a contradiction.


\end{document}